\newtheorem{theorem}{Theorem}[section]
\newtheorem{corollary}[theorem]{Corollary}
\newtheorem{proposition}[theorem]{Proposition}
\newtheorem{lemma}[theorem]{Lemma}
\theoremstyle{definition}
\newtheorem{definition}[theorem]{Definition}
\newtheorem{example}[theorem]{Example}
\theoremstyle{remark}
\newtheorem{remark}[theorem]{Remark}
\DeclareMathOperator{\Pow}{\mathcal P}
\DeclareMathOperator{\dis}{\mathrm {dis}}
\DeclareMathOperator{\id}{\mathrm {id}}
\DeclareMathOperator{\ob}{\mathrm {ob}}
\DeclareMathOperator{\diam}{\mathrm {diam}}
\DeclareMathOperator{\intdist}{d_{\mathrm {int}}}
\DeclareMathOperator{\netwdist}{d_{\mathcal N}}
\newcommand{\xto}{\xrightarrow}
\newcommand{\setrel}{\mathcal S}
\newcommand{\relcat}{\mathcal R}
\newcommand{\op}{\mathrm{op}}
\newcommand{\RR}{\mathbb R}
\newcommand{\cech}{\check{\mathcal C}}
\newcommand{\rstar}{[0,\infty]}
\newcommand{\filrel}{[\rstar, \relcat]}
\newcommand{\CC}{\mathcal C}
\newcommand{\DD}{\mathcal D}
\newcommand{\cx}{\mathrm {Cx}}
\newcommand{\hcx}{\mathrm {hCx}}
\newcommand{\topsp}{\mathrm {Top}}
\newcommand{\htop}{\mathrm {hTop}}
\newcommand{\dow}{\mathrm {Dow}}
\newcommand{\filhtop}{[\rstar,\mathrm {hTop}]}
\begin{document}

\title{Sparse Dowker Nerves}
\author{Nello Blaser}
\author{Morten Brun}

\begin{abstract}
  We propose sparse versions of filtered simplicial
  complexes used to compte persistent homology of point clouds and of
  networks. In particular we extend a slight variation of the Sparse
  \v Cech Complex of  Cavanna, Jahanseir and Sheehy \cite{SRGeom} from
  point clouds in Cartesian space to point clouds in arbitrary metric
  spaces. Along the way we formulate interleaving in terms of strict
  \(2\)-categories, and we introduce the concept of Dowker
  dissimilarities that can be considered as a common generalization of
  metric spaces and networks.
\end{abstract}
\maketitle
\section{Introduction}
\label{sec:intro}

This paper is the result of an attempt to obtain the interleaving
guarantee for the sparse \v Cech
complex of Cavanna, Jahanseir and Sheehy \cite{SRGeom} without using
the Nerve Theorem. The 
rationale for this was to generalize the result to arbitrary
metric spaces. We have not been able to show that the constructions of 
\cite{Sheehy2013} or \cite{SRGeom} are interleaved with the \v Cech
complex in arbitrary 
metric spaces. However, changing the construction slightly, we obtain a
sub-complex of the \v Cech complex that is interleaved
in a similar way. When applied to point clouds in \(\RR^d\) with a convex
metric this sub-complex is homotopic to the construction of
\cite{SRGeom} 

The search for a more general version of the sparse \v Cech complex
led us to study both different versions of filtered covers and
extended metrics. We discovered that these concepts are instances of
filtered relations given by functions of the form 
\begin{displaymath}
  \Lambda \colon L \times W \to \rstar
\end{displaymath}
from the product of two sets \(L\) and \(W\) to the interval
\(\rstar\). Given \(t \in \rstar\), the relation \(\Lambda_t\) at
filtration level \(t\) is
\begin{displaymath}
  \Lambda_{t} = \{(l,w) \in L \times W \, \mid \, \Lambda(l,w) < t\}.
\end{displaymath}
\cite{MR0048030} observed that a relation \(R \subseteq L \times
W\) gives a cover \((R(l))_{l \in L}\) of the set
\begin{displaymath}
  R_W = \{w \in W \, \mid \, 
  \text{ there exists \(l\in L\) with \((l,w) \in R\)} \}
\end{displaymath}
with
\begin{displaymath}
  R(l) = \{w \in W \, \mid \, 
  (l,w) \in R \}.
\end{displaymath}
The {\em Dowker complex} of the relation \(R\) is the Borsuk Nerve of this
cover. The {\em Dowker Homology Duality Theorem} \cite[Theorem 1]{MR0048030}
states that the Dowker complexes of \(R\) and the transposed relation
\begin{displaymath}
  R^t = \{(w,l) \, \mid \, (l, w)\in R\} \subseteq W \times L
\end{displaymath}
have isomorphic homology. In \cite{CM2016} Chowdhury and Mémoli have
sharpened the Dowker Homology Duality
Theorem to a Dowker Homotopy Duality Theorem stating that the
Dowker complexes of \(R\) and \(R^t\) are homotopy equivalent after
geometric realization. That result is a central ingredient in this paper.

In honor of Dowker we name functions \(\Lambda \colon L \times
W \to \rstar\) {\em Dowker dissimilarities}. Forming the Dowker
complexes of the relations \(\Lambda_t\) for \(t \in \rstar\) we
obtain a filtered simplicial complex, the {\em Dowker Nerve}
\(N\Lambda\) of \(\Lambda\), with  
\(N\Lambda_t\) equal to the Dowker complex of
\(\Lambda_t\).

The main result of our work is Theorem \ref{mainresult2} on
sparsification of Dowker nerves.
Here we formulate it in the context of a finite
set \(P\) contained in a metric space \((M,d)\).
Let \(p_0,\dots, ,p_n\) be a farthest point sampling of \(P\)
with insertion radii \(\lambda_0, \dots \lambda_n\). That is,
\(p_0 \in P\) is arbitrary, \(\lambda_0 = \infty\) and 
for each \(0 < k \le n\), the point \(p_k \in P\) is of maximal
distance to 
\(p_0,\dots,p_{k-1}\), and this distance is \(\lambda_{k}\). 
Let \(\varepsilon > 0\) and let \(\Lambda \colon P \times M \to
\rstar\) be the Dowker dissimilarity given by the metric \(d\), that
is, \(\Lambda(p,w) = d(p,w)\). Then the
Dowker Nerve \(N \Lambda\) is equal to the relative \v Cech
complex \(\check \CC(P,M)\) of \(P\) in \(M\) consisting of all balls
in \(M\) centered at 
points in \(P\). Let \([n] = \{0, \dots, n\}\) and
let \(\varphi \colon [n] \to [n]\) be a
function with \(\varphi(0) = 0\) and \(\varphi(k) < k\) and
\begin{displaymath}
  d(p_k, p_{\varphi(k)}) + (\varepsilon + 1)\lambda_k/\varepsilon \le (\varepsilon + 1)\lambda_{\varphi(k)}/\varepsilon
\end{displaymath}
for \(k = 1,\dots, n\).
The {\em Sparse Dowker Nerve} of \(\Lambda\) is the filtered sub-complex 
\(N(\Lambda, \varphi, (\varepsilon + 1)\lambda/\varepsilon)\) of
\(N\Lambda\) with \(N(\Lambda, \varphi, (\varepsilon + 1)\lambda/\varepsilon))_t\) consisting of
subsets \(\sigma \subseteq P\) such that there exists \(w \in M\)
with
\begin{displaymath}
  d(p_k, w) < \min\{ t, (\varepsilon + 1)\lambda_k/\varepsilon,
  (\varepsilon + 1)\lambda_{\varphi(l)}/\varepsilon \}
\end{displaymath}
for every \(k,l \in [n]\). 
\begin{theorem}\label{mainmetricthm}
  The Sparse Dowker Nerve
  \(N(\Lambda, \varphi, (\varepsilon + 1)\lambda/\varepsilon)\) 
  is multiplicatively \((1, 1 + \varepsilon)\)-interleaved with the
  relative \v Cech complex \(\check \CC(P,M)\) of \(P\) in \(M\). 
\end{theorem}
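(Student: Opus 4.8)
The plan is to deduce Theorem \ref{mainmetricthm} from the general sparsification result, Theorem \ref{mainresult2}, by checking its hypotheses for the concrete data at hand. First I would record the dictionary: the Dowker dissimilarity is \(\Lambda(p,w) = d(p,w)\) on \(P \times M\), the parent function is \(\varphi\), and the truncation function is \(k \mapsto t_k := (\varepsilon + 1)\lambda_k/\varepsilon\). As noted in the statement, \(N\Lambda\) is literally the relative \v Cech complex \(\check \CC(P,M)\), so the task reduces to verifying that the triple \((\Lambda, \varphi, t_\bullet)\) meets the standing assumptions of Theorem \ref{mainresult2} and then reading off the interleaving constants from \(t_\bullet\).

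One direction of the interleaving is essentially free. By construction the membership condition defining \(N(\Lambda,\varphi,t_\bullet)_t\) implies the bare \v Cech condition \(d(p_k, w) < t\), so the Sparse Dowker Nerve is a filtered sub-complex of \(N\Lambda\); the inclusion \(N(\Lambda,\varphi,t_\bullet)_t \hookrightarrow N\Lambda_t\) supplies the rate-\(1\) leg.

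For the reverse leg I would build a filtered simplicial map \(N\Lambda_t \to N(\Lambda,\varphi,t_\bullet)_{(1+\varepsilon)t}\) induced by a vertex rule that replaces each point by a \(\varphi\)-ancestor of sufficiently large truncation. The engine is the hypothesis \(d(p_k, p_{\varphi(k)}) + t_k \le t_{\varphi(k)}\): telescoping the triangle inequality along a \(\varphi\)-chain from \(k\) to an ancestor \(a = \varphi^{(r)}(k)\) yields \(d(p_k, p_a) + t_k \le t_a\), so any \v Cech witness \(w\) for \(p_k\) persists as a truncated witness for \(p_a\). Because \(\varphi(k) < k\) and \(\varphi(0) = 0\) with \(t_0 = \infty\), iterating \(\varphi\) always reaches a vertex whose truncation exceeds any prescribed level, so the rule is well defined. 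A point need only be replaced once its truncation \(t_k = (\varepsilon+1)\lambda_k/\varepsilon\) drops below the current \v Cech radius \(t\), that is, once \(\lambda_k < \tfrac{\varepsilon}{1+\varepsilon}t\); the covering-radius property of the farthest-point sampling---every point of \(P\) lies within \(\lambda_k\) of \(\{p_0,\dots,p_{k-1}\}\)---then produces a surviving coarser witness, and the calibration \(t_k/\lambda_k = (1+\varepsilon)/\varepsilon\) is exactly what converts this covering bound into the multiplicative slack \(1+\varepsilon\) in the filtration parameter.

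The main obstacle is not the construction of the two vertex maps but verifying that their composite is contiguous to the plain inclusion \(N\Lambda_t \hookrightarrow N\Lambda_{(1+\varepsilon)t}\), so that the interleaving holds up to homotopy and not merely on underlying sets of simplices. This is precisely the content packaged into Theorem \ref{mainresult2}, whose proof rests on the Chowdhury--M\'emoli Dowker Homotopy Duality Theorem to transport homotopies between the \(P\)-indexed Dowker complexes and their \(M\)-indexed transposes, together with the strict \(2\)-categorical contiguity formalism set up earlier. Once the nesting inequality and the sampling covering property are in hand, Theorem \ref{mainresult2} applies with truncation \(t_\bullet = (\varepsilon+1)\lambda/\varepsilon\) and returns exactly the multiplicative \((1, 1+\varepsilon)\)-interleaving claimed.
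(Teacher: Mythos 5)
Your overall strategy---specialize Theorem \ref{mainresult2}---is exactly the paper's route (it passes through Corollary \ref{multiplicativeinterleavingresult1} and the setup of Corollary \ref{sparseDowkernerve}), but your proposal misreads the interface of Theorem \ref{mainresult2} in a way that leaves the actual deduction missing. First, the dictionary is backwards: Theorem \ref{mainresult2} applies to a Dowker dissimilarity \(\Lambda \colon L \times [n] \to \rstar\) whose \emph{second} factor is the finite ordinal carrying the truncation weights, whereas you set \(\Lambda \colon P \times M \to \rstar\), so the theorem does not apply to your \(\Lambda\) at all. The paper takes \(\Lambda \colon M \times [n] \to \rstar\), \(\Lambda(x,k) = d(x,p_k)\), and recovers \(\check \CC(P,M) = N\Lambda^t\) at the end via the functorial Dowker theorem. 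Second, Theorem \ref{mainresult2} has no free ``truncation function'' slot: the function \(\lambda(k) = \alpha\beta^{\leftarrow}\lambda_{\Lambda,T}(k)\) is an \emph{output} determined by a choice of triangle relation \(T\) and resolution \(\beta\), not an input you may declare equal to \((\varepsilon+1)\lambda/\varepsilon\). Producing the claimed constants requires the specific instantiation \(\beta(t) = \varepsilon t\) together with a triangle relation satisfying \(\sup \Lambda(T) = 0\) (the nearest-point relation, available precisely because \(L = M\) contains each \(p_k\)), so that \(\alpha(t) = t + \beta(t) + \sup\Lambda(T) = (1+\varepsilon)t\); one must also verify \(\beta(t) \ge \rho_\Lambda\) for Lemma \ref{lambdaisaninsertionfunction}, which again holds only because \(L = M\) forces \(\rho_\Lambda = 0\)---with \(L = P\), as in your dictionary, \(\rho_\Lambda\) is the cover radius of \(P\) in \(M\) and \(\beta(t)=\varepsilon t\) violates the hypothesis at small \(t\). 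Then \(\beta^{\leftarrow}(s) = s/\varepsilon\) and \(\lambda_{\Lambda,T}(k) = \lambda_k\) give \(\lambda(k) = (1+\varepsilon)\lambda_k/\varepsilon\), which is where the announced truncation values come from. Your proposal asserts the conclusion of this computation without performing it, and this computation is essentially the entire content of deducing Theorem \ref{mainmetricthm} from Theorem \ref{mainresult2}.

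Your construction sketch for the reverse leg also does not match what the machinery provides, and on its own it would not close. In the paper there is no simplicial vertex map \(N\Lambda_t \to N(\Lambda, \varphi, (\varepsilon+1)\lambda/\varepsilon)_{(1+\varepsilon)t}\) given by ancestor replacement: the truncation leg (Lemma \ref{morphismintotruncation}) is the \emph{identity} on vertices and instead replaces the \emph{witness} by a sample point of large insertion radius within distance \(\beta(t) = \varepsilon t\) (your covering-radius remark is the right ingredient here, via Lemma \ref{lambdaisaninsertionfunction}); the ancestor climb \(\varphi^m\) appears only in Proposition \ref{formalpropertiesgivedeformationretract}, as a within-level deformation retraction onto the sparse sub-complex; and the two steps are spliced only in the homotopy category through the functorial Dowker theorem, since they live on opposite sides of the transposition. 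Where your telescoped inequality \(d(p_k, p_{\varphi(k)}) + \lambda(k) \le \lambda(\varphi(k))\) genuinely earns its keep is a point you do not raise: Theorem \ref{mainresult2} is stated for the \emph{maximal} parent function of Definition \ref{defineparentfunction}, while Theorem \ref{mainmetricthm} allows any \(\varphi\) with \(\varphi(0)=0\), \(\varphi(k)<k\) satisfying that inequality, so one must observe that Proposition \ref{formalpropertiesgivedeformationretract} uses only its conditions (1)--(4) and that the inequality implies the ball-nesting condition (2) and the monotonicity condition (4) for the truncated dissimilarity. You have the right inequality but attach it to the wrong step, and the gap between your heuristic vertex map and a verified application of Theorem \ref{mainresult2} is exactly the missing instantiation described above.
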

Explicitly, there are
maps \(f_t \colon N\Lambda_t \to N(\Lambda, \varphi, (\varepsilon +
1)\lambda/\varepsilon)_{(1+\varepsilon) t}\) so that if \(g_t \colon 
N(\Lambda, \varphi, (\varepsilon +
1)\lambda/\varepsilon)_{t} \to N\Lambda_t\) are the inclusion maps,
then \(f_t g_t\) and \(g_{(1 + \varepsilon)t} f_t\) are homotopic to
the inclusion of the space of level \(t\) into the space of level \((1
+ \varepsilon) t\) of the filtered simplicial complexes
\(N(\Lambda, \varphi, (\varepsilon +
1)\lambda/\varepsilon)\) and \(N\Lambda\) respectively.
For \(M = \RR^d\) the Sparse Dowker Nerve is a closely related to
the Sparse \v Cech Complex of \cite{SRGeom}. We have implemented both
constructions made them available at GitHub \cite[]{ourCode}. It
turned out that the two constructions are of similar size. We will
leave it for further work to implement a Sparse Dowker Nerve vesion of
the Witness Complex.

Chazal et al. observed in \cite{Chazal2014} that witness complexes and \v Cech
complexes are both
instances of Dowker dissimilarities. The weighted \v Cech complex in
\cite[Definition \(5.1\)]{buchet16efficient} is also an instance of a
Dowker complex.  
Also the filtered clique complex of a finite weighted
undirected simple 
graph 
\((G,w)\) is an
instance of a Dowker nerve: let \(\Pow(G)\) be the set of subsets of
\(G\) and define 
\begin{displaymath}
  \Lambda \colon G \times \Pow(G) \to \rstar, \qquad 
  (v,V) \mapsto
  \begin{cases}
    \diam(V) & \text{if \(v \in V\)} \\
    \infty & \text{otherwise},
  \end{cases}
\end{displaymath}
where \(\diam(V) = \max_{v,v' \in V} w(v,v')\).
Then the Dowker Nerve of \(\Lambda\) is equal to the filtered clique
complex of \(G\). 

For disjoint sets \(L\) and \(W\) a Dowker dissimilarity \(\Lambda \colon L
\times W \to \rstar\) is the same thing as
a weighted simple bipartite graph. On the other hand, a Dowker
dissimilarity of the form \(\Lambda \colon X \times X \to \rstar\) is
the same 
thing as a weighted directed graph with no
multiple edges. In \cite{CM2016} Dowker dissimilarities of this form are
called weighted networks, and their Dowker nerves are studied
thoroughly under the name Dowker complexes. In particular they show
that the persistent homology of the Dowker Nerve of a network is
sensitive to 
the direction its edges.
For example,
for the networks \(A\) and \(B\) in 
Figure \ref{fig:asymmetricnetworks},
with self-loops of weight \(0\), 
the Dowker Nerve of network \(A\) is contractible
while the Dowker Nerve of network \(B\) is homotopic to a circle at
all filtration levels.
\begin{figure}[h]
  \centering
  \begin{displaymath}
    A = 
    \left(
      \begin{tikzcd}
        & 1 \arrow[dr, "0"] & \\
        0 \arrow[ur, "0"] \arrow[rr, "0"] && 2 \\
      \end{tikzcd}
    \right)
    \qquad
    B = 
    \left(
      \begin{tikzcd}
        & 1 \arrow[dr, "0"] & \\
        0 \arrow[ur, "0"]  && 2 \arrow[ll, "0"]\\
      \end{tikzcd}
    \right)
  \end{displaymath}
  `  
  \caption{The Dowker Nerve of network \(A\) is contractible while the Dowker
    Nerve of network \(B\) is homotopic to a circle.}
  \label{fig:asymmetricnetworks}
\end{figure}
Chowdhury and Mémoli also formulate a stability result for homology of Dowker
nerves \cite{CM2016}. We formulate interleaving of Dowker
dissimilarities in such a 
way that their network distance is bounded below by our interleaving
distance. Together with 
functoriality for interleaving distance and the Algebraic Stability
Theorem \cite{Chazal:2009:GSS:1735603.1735622} this implies the
stability result of \cite{CM2016}.
In the context of metric spaces, 
this Stability Theorem
is contained in
\cite{Chazal2014}. 

Imposing conditions on a Dowker dissimilarity of the form
\[\Lambda
\colon X 
\times X \to \rstar\]
we arrive at concepts of independent
interest. Most importantly, \((X,\Lambda)\) is a metric space 
if and only if \(\Lambda\) satisfies
\begin{description}
\item[Finiteness] \(\Lambda(x,y) < \infty\) for all \(x,y \in X\)
\item[Triangle inequality] \(\Lambda(x,z) \le \Lambda(x,y) +
  \Lambda(y,z)\) for \(x,y,x \in X\).
\item[Identity of indiscernibles] \(d(x,y) = 0\) if and only if \(x =
  y\)
\item[Symmetry] \(d(x,y) = d(y,x)\) for all \(x,y \in X\)
\end{description}
Removing some of the above conditions on \(\Lambda\) leads to various
generalizations of metric spaces. 
In particular the situation where \(\Lambda\) only is required to satisfy
the triangle inequality 
has been studied by Lawvere \cite{MR1925933}. He noticed that
\(\rstar\) is a closed symmetric monoidal 
category and 
that when the triangle inequality holds, then \(\Lambda\) gives
\(X\) the structure of a 
category enriched over \(\rstar\). 

Guided by the Functorial Dowker Theorem we have chosen to work with
interleavings in the homotopy category 
instead of on the level of homology groups. We leave it for further
investigation to decide if
the Functorial Dowker Theorem can be extended to homotopy interleavings
in the sense of Blumberg and Lesnick \cite{1705.01690}. 

We extend the usual notion of interleaving between \(\rstar\)-filtered
objects in two
ways. Firstly, we consider interleavings in \(2\)-categories. We were
led to do this because Dowker dissimilarities form a
\(2\)-category, and the proof of the Stability Theorem
is streamlined by working in this
generality. Secondly, following \cite{MR3413628}
we allow
interleaving with respect to order 
preserving functions
of the form \(\alpha \colon \rstar \to \rstar\) satisfying \(t \le
\alpha(t)\) for all \(t\). In this context additive interleaving
corresponds to functions of the form \(\alpha(t) = t + a\) and
multiplicative interleaving corresponds to functions of the form
\(\alpha(t) = ct\). 

After setting terminology and notation, the proof of our main result,
Theorem \ref{mainresult2}, is
a quite simple application of the functorial Dowker Theorem. It consists
of two parts. First we truncate the Dowker dissimilarity associated
to a metric by
replacing certain distances by infinity and show that the truncated Dowker
dissimilarity is interleaved with the original Dowker
dissimilarity. At that point we use the functorial Dowker Theorem. 
Second we give conditions that allow us to sparsify the
Dowker Nerve of the truncated Dowker dissimilarity without changing the filtered homotopy
type. 

The paper is organized as follows: In Section
\ref{sec:contiguitycategory} we 
present the homotopy category of simplicial complexes. In Section
\ref{sec:twocats} we recollect basic terminology about
\(2\)-categories. The main motivation for going to this level of
generality is that interleaving distance in the \(2\)-category \(\dow\) of
Dowker dissimilarities defined in \ref{categorydow} generalizes
network distance from \cite{CM2016}.
Section \ref{sec:interleavings} introduces interleavings in
\(2\)-categories. In Section \ref{sec:multivaluedmaps} we introduce
the \(2\)-category of sets and relations. Section \ref{sec:relcat}
uses the Dowker Nerve construction to define a \(2\)-category with
relations as objects. In Section \ref{sec:dowkerdissimilarities} we
define the \(2\)-category 
of Dowker dissimilarities and introduce the concept of a triangle
relation used as a substitute for the triangle equation for
metric spaces. In Section \ref{sec:stability} we relate interleaving
distance of Dowker dissimilarities to Gromov--Hausdorff distance of
metric spaces. Section \ref{sec:truncated} shows that, under certain
conditions, when some
of the values 
\(\Lambda(l,w)\) in a Dowker dissimilarity are set to infinity the
homotopy type of the Dowker Nerve is only changed up to a certain interleaving.
This is the first step in our proof
of Theorem \ref{mainresult2}. 
In Section \ref{sec:dnerves} we give a criterion ensuring that a certain
sub-complex is homotopy equivalent to the Dowker Nerve of a Dowker
dissimilarity.
Finally in Section \ref{sec:filtereddowkerdissimilarities} we combine
the results of sections \ref{sec:truncated}
and \ref{sec:dnerves} to obtain Theorem \ref{mainresult2}. We also show how
Theorem \ref{mainmetricthm} is a consequence of Theorem \ref{mainresult2} and how the Sparse \v Cech complex \cite{SRGeom} fits into this context.

\section{The Homotopy Category of Simplicial Complexes}
\label{sec:contiguitycategory}

Recall that a simplicial complex \(K = (V,K)\) consists of a vertex
set \(V\) and a set \(K\) of finite subsets of \(V\) with the property that
if \(\sigma\) is a member of \(K\), then every subset of \(\sigma\) is
a member of \(K\). Given a subset \(V' \subseteq V\) and a simplicial
complex \(K = (V,K)\), we write \(K_{V'}\) for the simplicial complex
\(K_{V'} = (V',K_{V'})\) consisting of subsets of \(V'\) of the form
\(\sigma \cap V'\) for \(\sigma \in K\). The {\em geometric
  realization} of a simplicial 
complex \(K = (V,K)\) is the space \(|K|\) consisting
of all functions \(f \colon V \to \RR\) satisfying:
\begin{enumerate}
\item The support \(\{v \in V \, \mid \, f(v) \ne 0\}\) of \(f\) is a
  member of 
  \(K\) 
\item \(\sum_{v \in V} f(v) = 1\).
\end{enumerate}
If \(V\) is finite, then \(|K|\) is given the subspace topology of the
Euclidean space \(\RR^V\). Otherwise  
\(U \subseteq |K|\) is open if and only if for every finite \(V'
\subseteq V\), the set \(U \cap |K_{V'}|\) is open in \(|K_{V'}|\).

A {\em simplicial map} \(f \colon K \to L\) of simplicial complexes
\(K = (V,K)\) and \(L = (W,L)\) consists of a function \(f \colon V
\to W\) such that
\[f(\sigma) = \{f(v) \, \mid \, v \in \sigma\}\]
is
in \(L\) for every \(\sigma \in K\). Observe that a simplicial map \(f
\colon K \to L\) induces a continuous map \(|f| \colon |K| \to |L|\)
of geometric realizations and that this promotes the geometric
realization to a functor \(|\, \cdot \, | \colon \cx \to \topsp\) from the
category \(\cx\) of simplicial complexes and simplicial maps to the
category \(\topsp\) of topological spaces and continuous maps.
\begin{definition}
  The {\em homotopy category \(\hcx\) of simplicial complexes} has
  the class of
  simplicial complexes as objects. Given simplicial complexes \(K\)
  and \(L\), the morphism set \(\hcx(K,L)\) is the set of homotopy
  classes of continuous maps from the geometric realization of \(K\)
  to the geometric realization of \(L\). Composition in \(\hcx\) is
  given by composition of functions representing homotopy classes.
\end{definition}
We remark in passing that the homotopy category of simplicial
complexes is equivalent to the weak homotopy category of topological spaces.

\section{Background on \(2\)-categories}
\label{sec:twocats}
The material in this section is standard. We have taken it from
\cite{math/9810017}. 
Recall that a \(2\)-category \(\CC\) consists of
\begin{enumerate}
\item A class of objects \(A,B,\dots\),
\item For all objects \(A, B\) a category \(\CC(A,B)\). The objects of
  \(\CC(A,B)\) are the morphisms in \(\CC\) and the morphisms \(\alpha
  \colon f \Rightarrow g\) of \(\CC(A,B)\) are the \(2\)-cells in \(\CC\).
\item For every object \(A\) of \(\CC\) there is an identity morphism
  \(\id_A \colon A \to A\) and an identity \(2\)-cell \(\id_{\id_A}
  \colon \id_A \Rightarrow \id_A\).
\item For all objects \(A\), \(B\) and \(C\) of \(\CC\) there is a
  functor
  \begin{align*}
    \CC(A,B) \times \CC(B,C) &\to \CC(A,C) \\
    (f,g) & \mapsto g \cdot f
  \end{align*}
  which
  is associative and admits the identity morphisms and identity
  \(2\)-cells of \(\CC\) as identities.
\end{enumerate}
\begin{definition}
  Given \(2\)-categories \(\CC\) and \(\DD\), a {\em functor} \(F
  \colon 
  \CC \to \DD\) consists of
  \begin{enumerate}
  \item Function \(F \colon \ob \CC \to \ob \DD\)
  \item Functors \(F \colon \CC(A,B) \to \DD(FA,FB)\)
  \end{enumerate}
  such that \(F(\id_A) = \id_{FA}\) and \(Fg \circ Ff = F(g \circ f)\)
  for \(A\) an object of \(\CC\) and \(f \colon A \to B\) and \(g
  \colon B \to C\) morphisms of \(\CC\).
\end{definition}
\begin{definition}
  Given two functors \(F,G \colon \CC \to \DD\) of \(2\)-categories, a
  {\em transformation} 
  \(\alpha \colon F \to G\) consists of
  \begin{enumerate}
  \item A morphism \(\alpha_A \colon FA \to GA\) for every \(A \in \ob \CC\)
  \item A \(2\)-cell \(\alpha_f \colon Gf \circ \alpha_A \to \alpha_B
    \circ Ff\) for every morphism \(f \colon A \to B\) in \(\CC\).
  \end{enumerate}
  This structure is subject the axioms given by commutativity of the
  following two diagrams: 
  \begin{displaymath}
    \begin{tikzcd}
      & Gg \circ \alpha_B \circ Ff \arrow[dr, "\alpha_g \circ \id_{Ff}"]&\\
      Gg \circ Gf \circ \alpha_A \arrow[ur, "\id_{Gg} \circ \alpha_f"] 
      \arrow[rr, "\alpha_{g \cdot f}"] &&
      \alpha_C \circ Fg \circ Ff
    \end{tikzcd}
  \end{displaymath}
  \begin{displaymath}
    \begin{tikzcd}
      & \sigma_A \arrow[dr, "\id"]& \\
      G(\id_A) \circ \alpha_A \arrow[ur, "\id"]
      \arrow[rr, "\alpha_{\id_A}"] &&
      \sigma_A \circ F(\id A).
    \end{tikzcd}
  \end{displaymath}
\end{definition}
\begin{definition}
  Given two functors \(F,G \colon \CC \to \DD\) of \(2\)-categories,
  and transformations
  \(\alpha, \beta \colon F \to G\), a {\em modification} \(M \colon
  \alpha \to \beta\) consists of a \(2\)-cell
  \begin{displaymath}
    M_A \colon \alpha_A \to \beta_A
  \end{displaymath}
  for every object \(A\) of \(\CC\) such that for every morphism \(f
  \colon A \to B\) of \(\CC\) the following diagram commutes:
  \begin{displaymath}
    \begin{tikzcd}
      Gf \circ \alpha_A 
      \arrow[rr, "\id_{Gf} \circ M_A"]
      \arrow[d, "\alpha_f"]
      &&
      Gf \circ \beta_A
      \arrow[d, "\beta_f"] \\
      \alpha_B \circ Ff
      \arrow[rr, "M_B \circ \id_{Ff}"]
      &&
      \beta_B \circ Ff.
    \end{tikzcd}
  \end{displaymath}
\end{definition}
\begin{definition}
  Given \(2\)-categories \(\CC\) and \(\DD\), the
  {\em functor \(2\)-category} \([\CC, \DD]\) is the \(2\)-category with
  functors \(F \colon \CC \to \DD\) as objects, transformations of
  such functors as morphisms and with \(2\)-cells given by modifications.
\end{definition}

Given a category \(\CC\) we will consider it as a \(2\)-category with
only identity \(2\)-cells. Thus, if \(\CC\) is a category and \(\DD\)
is a \(2\)-category we have defined the functor \(2\)-categories
\([\CC,\DD]\) and \([\DD,\CC]\).

\begin{definition}
  The opposite of a \(2\)-category \(\CC\) is the \(2\)-category
  \(\CC^{\op}\) with the same objects as \(\CC\), with
  \begin{displaymath}
    \CC^{\op}(A,B) = \CC(B,A)
  \end{displaymath}
  and with composition obtained from composition in \(\CC\). 
\end{definition}

\section{Interleavings}
\label{sec:interleavings}

We write \(\rstar\) for the extended set of non-negative real numbers 
and consider it as a partially ordered set. We also consider
\(\rstar\) as a category with object set \([0,\infty]\) and with a
unique morphism \(s \to t\) if and only if \(s \le t\).
\begin{definition}
  Let \(\CC\) be a \(2\)-category.
  The {\em category of filtered objects} in \(\CC\) is the functor
  \(2\)-category \([\rstar, \CC]\).  
  A {\em filtered object} in \(\CC\) is an object \(C \colon \rstar \to
  \CC\) of \([\rstar, \CC]\), that is, \(C\) is a functor from
  \(\rstar\) to \(\CC\). A {\em morphism} \(f \colon C \to C'\) of filtered
  objects in \(\CC\) is a transformation.
\end{definition}
\begin{definition}
  Let \(\CC\) be a \(2\)-category and let \(\alpha \colon \rstar \to
  \rstar\) be 
  a functor under the identity, that is, order preserving function
  satisfying \(t \le 
  \alpha(t)\) for all \(t \in \rstar\).
  \begin{enumerate}
  \item
    The the pull-back functor \(\alpha^* \colon
    [\rstar,\CC] \to [\rstar,\CC]\) is the functor taking
    a filtered object \(C \colon \rstar \to
    \CC\) in \(\CC\) to the filtered object \(\alpha^* C = C \circ \alpha\).
  \item
    The {\em unit of the functor \(\alpha^* \colon [\rstar,\CC] \to
      [\rstar,\CC]\)} is the natural transformation \(\alpha_*
    \colon \id \to \alpha^*\) defined by
    \begin{displaymath}
      \alpha_{*C}(t) = C(t \le \alpha(t)) \colon C(t) \to \alpha^*(C)(t) .
    \end{displaymath}
  \end{enumerate}
\end{definition}
\begin{definition}
  Let \(C\) and \(C'\) be filtered objects in a \(2\)-category \(\CC\) and
  let \(\alpha, \alpha' \colon \rstar \to \rstar\) be 
  functors under the identity.
  \begin{enumerate}
  \item 
    An \((\alpha,\alpha')\)-interleaving between \(C\) and \(C'\) is a
    pair \((F,F')\) of morphisms \(F \colon C \to \alpha^*C'\) and \(F'
    \colon C' \to \alpha'^* C\) in \([\rstar,\CC]\) such that there
    exist \(2\)-cells
    \[(\alpha' \circ 
      \alpha)_* \to (\alpha^* F') \circ F
      \quad \text{and} \quad
      (\alpha \circ \alpha')_* \to
    (\alpha'^* F) \circ F'.\] 
  \item 
    We say that \(C\) and \(C'\) are {\em
      \((\alpha,\alpha')\)-interleaved} if there exists an
    \((\alpha,\alpha')\)-interleaving between \(C\) and \(C'\). 
  \end{enumerate}
\end{definition}
The following results appear in \cite[Proposition 2.2.11 and Proposition 2.2.13]{MR3413628}.
\begin{lemma}[Functoriality]
  \label{inducedinterleaving}
  Let \(C\) and \(C'\) be filtered objects in a \(2\)-category \(\CC\),
  let \(\alpha, \alpha' \colon \rstar \to \rstar\) be 
  functors under the identity and let \(H \colon \CC \to \DD\)
  be a functor of \(2\)-categories. 
  If \(C\) and \(C'\) are \((\alpha,\alpha')\)-interleaved, then the
  filtered objects \(H C\) and \(H
  C'\) in \(\DD\) are
  \((\alpha,\alpha')\)-interleaved. 
\end{lemma}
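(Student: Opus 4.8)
The plan is to reduce the statement to a single structural fact: post-composition with $H$ defines a functor of $2$-categories
\[
  H_* \colon [\rstar, \CC] \to [\rstar, \DD],
\]
which is \emph{strictly} compatible with the reparametrization data $\alpha^*$ and $\alpha_*$. Once this is in place, an interleaving is transported from $\CC$ to $\DD$ simply by applying $H_*$ to all the data in sight.

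First I would construct $H_*$. On objects it sends a filtered object $C \colon \rstar \to \CC$ to $HC = H \circ C$, which, being a composite of functors of $2$-categories, is again a functor $\rstar \to \DD$ and hence a filtered object in $\DD$. On a morphism $F \colon C \to C''$, that is, a transformation with components $F_t \colon C(t) \to C''(t)$ and $2$-cells $F_{s \le t}$, I would set $(H_* F)_t = H(F_t)$ and $(H_* F)_{s \le t} = H(F_{s \le t})$. That these assemble into a transformation $HC \to HC''$ uses that $H$ preserves composition of morphisms strictly: $H\bigl(C''(s \le t) \circ F_s\bigr) = (HC'')(s \le t) \circ (H_* F)_s$, and likewise for the target, so $H(F_{s\le t})$ has exactly the source and target required of a component of $H_*F$; the two coherence diagrams for $F$ are carried by the hom-functors of $H$ to the corresponding diagrams for $H_* F$. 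On a $2$-cell (modification) $M$ I would set $(H_* M)_t = H(M_t)$, the modification square again being the $H$-image of the defining square for $M$. Functoriality of $H_*$ and preservation of identities then follow immediately from the corresponding properties of $H$.

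Next I would record the two compatibilities. Since $\alpha^* X = X \circ \alpha$ is defined by precomposition with $\alpha$, post-composition with $H$ commutes with it on the nose: $H_*(\alpha^* X) = \alpha^*(H_* X)$ for every filtered object or morphism $X$, because evaluating either side at $t$ produces the $H$-image of the value of $X$ at $\alpha(t)$. Likewise the unit is preserved, $H_*(\alpha_{*C}) = \alpha_{*(HC)}$, since its component at $t$ is $H\bigl(C(t \le \alpha(t))\bigr) = (HC)(t \le \alpha(t))$; the identical computation applies to the units of $\alpha' \circ \alpha$ and $\alpha \circ \alpha'$.

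Finally I would apply $H_*$ to the given interleaving $(F, F')$. Because $H_*$ is a functor of $2$-categories it sends the two interleaving $2$-cells to $2$-cells in $[\rstar, \DD]$; using functoriality $H_*\bigl((\alpha^* F') \circ F\bigr) = \alpha^*(H_* F') \circ H_* F$ together with the compatibilities above, their sources and targets become exactly $(\alpha' \circ \alpha)_{*(HC)}$ and $\alpha^*(H_* F') \circ H_* F$, and the analogous pair with the roles of $\alpha$ and $\alpha'$ exchanged. Hence $(H_* F, H_* F')$ equipped with these transported $2$-cells is an $(\alpha, \alpha')$-interleaving between $HC$ and $HC'$, as required. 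I expect the only genuine work to lie in the first step, namely verifying that post-composition with $H$ respects the transformation and modification axioms; the compatibility identities and the transport of the $2$-cells are then purely formal consequences of the strictness $H(g \circ f) = Hg \circ Hf$ of the $2$-functor $H$.
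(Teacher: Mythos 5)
Your proof is correct, and it is the standard argument: the paper itself gives no proof of this lemma, deferring instead to \cite[Propositions 2.2.11 and 2.2.13]{MR3413628}, where exactly this post-composition argument is carried out. Your reduction — that \(H_* = H \circ (-)\) is a strict \(2\)-functor \([\rstar,\CC] \to [\rstar,\DD]\) commuting on the nose with \(\alpha^*\) and with the units \(\alpha_*\), so that applying it to the pair \((F,F')\) and the two interleaving \(2\)-cells transports the entire structure — matches the intended proof, including the one point of substance (that \(H\) preserves the transformation and modification axioms), which holds because the paper's functors of \(2\)-categories are strict.
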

\begin{lemma}[Triangle inequality]
  Let \(C\), \(C'\) and \(C''\) be filtered objects in a \(2\)-category
  \(\CC\). If \(C\) and \(C'\) are \((\alpha,\alpha')\)-interleaved
  and \(C'\) and \(C''\) are \((\beta,\beta')\)-interleaved, then
  \(C\) and \(C''\) are \((\beta \alpha, \alpha'
  \beta')\)-interleaved.  
\end{lemma}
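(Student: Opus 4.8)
The plan is to produce the data of a \((\beta\alpha,\alpha'\beta')\)-interleaving directly by composing the two given ones, using that pullback along a composite factors as \((\beta\alpha)^* = \alpha^*\circ\beta^*\) and that each \(\alpha^*\) is a \(2\)-functor. Write the first interleaving as morphisms \(F\colon C\to\alpha^* C'\), \(F'\colon C'\to\alpha'^* C\) with \(2\)-cells \(\mu\colon(\alpha'\circ\alpha)_*\Rightarrow(\alpha^* F')\circ F\) and \(\mu'\colon(\alpha\circ\alpha')_*\Rightarrow(\alpha'^* F)\circ F'\), and the second as \(G\colon C'\to\beta^* C''\), \(G'\colon C''\to\beta'^* C'\) with \(\nu\colon(\beta'\circ\beta)_*\Rightarrow(\beta^* G')\circ G\) and \(\nu'\colon(\beta\circ\beta')_*\Rightarrow(\beta'^* G)\circ G'\). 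As the two composite morphisms I would take
\[H=(\alpha^* G)\circ F\colon C\to(\beta\alpha)^* C'',\qquad H'=(\beta'^* F')\circ G'\colon C''\to(\alpha'\beta')^* C,\]
whose types are exactly those required by the definition of a \((\beta\alpha,\alpha'\beta')\)-interleaving.

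First I would record the two formal facts that do all the work. Because each \(\alpha^*\) is precomposition with an order preserving map, it is a \(2\)-functor on \([\rstar,\CC]\): it preserves composition of transformations and carries the modifications \(\mu,\nu,\dots\) to modifications. Second, because \(\rstar\) is a poset, any composite of unit components is again the unique unit component between the same pair of filtration levels; concretely this yields the equalities \((\beta\alpha)_*=(\alpha^*\beta_*)\circ\alpha_*\) and, whiskered appropriately,
\[\bigl((\alpha'\beta')\circ(\beta\alpha)\bigr)_{*,C}=\alpha^*\bigl((\beta'\circ\beta)_{*,\alpha'^* C}\bigr)\circ(\alpha'\circ\alpha)_{*,C},\]
together with the naturality squares of the units \((\beta'\circ\beta)_*\). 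These identities are the only place order content enters; everything else is formal.

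The first required \(2\)-cell, from \(\bigl((\alpha'\beta')\circ(\beta\alpha)\bigr)_*\) to \(((\beta\alpha)^* H')\circ H\), I would assemble as a vertical composite of whiskered \(2\)-cells. Expanding the target gives
\[((\beta\alpha)^* H')\circ H=(\alpha^*\beta^*\beta'^* F')\circ(\alpha^*\beta^* G')\circ(\alpha^* G)\circ F.\]
Starting from \(\bigl((\alpha'\beta')\circ(\beta\alpha)\bigr)_{*,C}\), I rewrite it by the unit identity above as \(\alpha^*\bigl((\beta'\circ\beta)_{*,\alpha'^* C}\bigr)\circ(\alpha'\circ\alpha)_{*,C}\), then whisker \(\mu\) on the left by \(\alpha^*\bigl((\beta'\circ\beta)_{*,\alpha'^* C}\bigr)\) to reach \(\alpha^*\bigl((\beta'\circ\beta)_{*,\alpha'^* C}\bigr)\circ(\alpha^* F')\circ F\). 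A single naturality square of \((\beta'\circ\beta)_*\) at \(F'\), pushed through the \(2\)-functor \(\alpha^*\), rewrites this as \((\alpha^*\beta^*\beta'^* F')\circ\alpha^*\bigl((\beta'\circ\beta)_{*,C'}\bigr)\circ F\); finally I would whisker \(\alpha^*\nu\) by \(F\) on the right and by \(\alpha^*\beta^*\beta'^* F'\) on the left to land exactly on the target. The second \(2\)-cell, from \(\bigl((\beta\alpha)\circ(\alpha'\beta')\bigr)_*\) to \(((\alpha'\beta')^* H)\circ H'\), is built in the same way with the roles of \((F,F',\mu')\) and \((G,G',\nu')\) interchanged.

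The main obstacle is purely bookkeeping: keeping track of which pullback functor whiskers which \(2\)-cell, in what order, and checking that the source and target of each pasted \(2\)-cell agree on the nose. All of this reduces to the two formal facts above — \(2\)-functoriality of the pullbacks and the uniqueness of order morphisms in \(\rstar\), hence unique composites of units — so no genuinely new idea is needed beyond organizing the pasting. I would present the argument as a single pasting diagram in \([\rstar,\CC]\) so that the whiskerings and the one naturality square are visible at once.
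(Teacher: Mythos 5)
Your proof is correct, and it is worth noting that it supplies an argument the paper itself does not spell out: the authors simply cite \cite{MR3413628}, where the analogous statement is proved for filtered objects in an ordinary category, so that the interleaving conditions are equalities rather than \(2\)-cells. Your composites \(H=(\alpha^*G)\circ F\) and \(H'=(\beta'^*F')\circ G'\) are the expected ones, the strict identities \((\beta\alpha)^*=\alpha^*\circ\beta^*\) and \(\bigl((\alpha'\beta')\circ(\beta\alpha)\bigr)_{*,C}=\alpha^*\bigl((\beta'\beta)_{*,\alpha'^*C}\bigr)\circ(\alpha'\alpha)_{*,C}\) do hold componentwise exactly as you say, and the whiskered pasting of \(\mu\), the naturality datum, and \(\alpha^*\nu\) lands on \(((\beta\alpha)^*H')\circ H\) on the nose; the second \(2\)-cell is indeed symmetric, using \(\nu'\) first and then \(\beta'^*\mu'\). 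What your \(2\)-categorical version buys over the cited \(1\)-categorical proof is precisely that the interleaving conditions need only be witnessed by \(2\)-cells, which is the generality the paper actually uses.

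The one step you should state more carefully is the ``naturality square of \((\beta'\circ\beta)_*\) at \(F'\)''. In this paper a morphism of filtered objects is a transformation with structure \(2\)-cells \(F'_{s\le t}\colon (\alpha'^*C)(s\le t)\circ F'_s\Rightarrow F'_t\circ C'(s\le t)\) that need not be identities, so that square does not commute strictly and ``rewrites'' is not an equality. Instead, the family \(\{F'_{t\le\beta'\beta t}\}_{t}\) gives a canonical \(2\)-cell from \((\beta'\beta)_{*,\alpha'^*C}\circ F'\) to \(\bigl((\beta'\beta)^*F'\bigr)\circ(\beta'\beta)_{*,C'}\); fortunately, with the paper's orientation convention \(\alpha_f\colon Gf\circ\alpha_A\to\alpha_B\circ Ff\) this \(2\)-cell points in exactly the direction your vertical composite needs (and likewise for \(G\) in the second pasting), and the coherence axiom for the transformation \(F'\), applied to the two factorizations of \(s\le\beta'\beta t\), shows this family satisfies the modification axiom, hence is a legitimate \(2\)-cell of \([\rstar,\CC]\). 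With that substitution --- lax structure cell in place of strict naturality --- your argument is complete; since the definition of interleaving only demands existence of \(2\)-cells \(\bigl((\alpha'\beta')(\beta\alpha)\bigr)_*\Rightarrow((\beta\alpha)^*H')\circ H\) and \(\bigl((\beta\alpha)(\alpha'\beta')\bigr)_*\Rightarrow((\alpha'\beta')^*H)\circ H'\), no further coherence checking is required.
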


\section{Relations}
\label{sec:multivaluedmaps}

\begin{definition}
  Let \(X\) and \(Y\) be sets.
  A {\em relation} \(R \colon X \leftrightarrows Y\) is a subset
  \(R \subseteq X \times Y\).
\end{definition}
\begin{definition}
  We define a partial order on the set of relations between \(X\)
  and \(Y\) by set inclusion. That is, for relations \(R \colon X
  \leftrightarrows Y\) and \(R' 
  \colon X \leftrightarrows Y\), we have \(R \le R'\) if and only if
  \(R\) contained in the subset of \(R'\) of \(X \times Y\).
\end{definition}
\begin{definition}
  Given two relations \(R \colon X \leftrightarrows Y\) and
  \(S \colon Y \leftrightarrows Z\), their composition 
  \[S \circ R \colon X
  \leftrightarrows Z\] is
  \begin{displaymath}
    S \circ R = \{ (x,z) \in X \times Z \, \mid \,
      \exists \, y \in Y : 
      (x,y) \in R \text{ and } (y,z) \in S, 
     \}.
  \end{displaymath}
\end{definition}
\begin{definition}
  The \(2\)-category \(\setrel\) of sets and relations has as objects
  the class of 
  sets and as morphisms the class of relations. The
  \(2\)-cells are given by the inclusion partial order on
  the class of relations. Composition of morphisms is
  composition of relations and composition of \(2\)-cells is given
  by composition of inclusions. The identity morphism on the set
  \(X\) is the diagonal
  \begin{displaymath}
    \Delta_X = \{(x,x)\, \mid \, x \in X\}.
  \end{displaymath}
  The identity \(2\)-cell on a relation \(R\) is the identity
  inclusion \(R \le R\).
\end{definition}
\begin{definition}
  The transposition functor \(T \colon \setrel \to \setrel^{\op}\) is
  defined by \(T(X) = X\),
  \begin{displaymath}
    T(R) = R^t = \{(y,x) \, \mid \, (x,y) \in R\}
  \end{displaymath}
  and \(T(i) = i^t\), where \(i^t \colon R^t \to S^t\) takes \((y,x)\)
  to \((z,w)\) when \((w,z) = i(x,y)\).
\end{definition}

\begin{definition}\label{definecorresponodence}
  A {\em correspondence} \(C \colon X \leftrightarrows Y\) is a
  relation such that:
  \begin{enumerate}
  \item for every \(x \in X\) there exists \(y \in Y\) so that \((x,y)
    \in C\) and
  \item for every \(y \in Y\) there exists \(x \in X\) so that \((x,y)
    \in C\).
  \end{enumerate}
\end{definition}
\begin{lemma}
  A relation \(C \colon X \leftrightarrows Y\) is a
  correspondence if and only if there exists a relation \(D
  \colon Y \leftrightarrows X\) so that \(\Delta_X \le D \circ C\) and
  \(\Delta_Y \le C \circ D\).
\end{lemma}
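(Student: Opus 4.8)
The plan is to prove the two implications separately, in each case just unwinding the definition of composition of relations together with the fact that the \(2\)-cell order is set inclusion. The one creative step is guessing the relation \(D\) in the forward direction, and the natural candidate is the transpose \(C^t = \{(y,x) \mid (x,y) \in C\}\).

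First I would handle the easy direction: assume a relation \(D \colon Y \leftrightarrows X\) with \(\Delta_X \le D \circ C\) and \(\Delta_Y \le C \circ D\) is given. By the definition of composition, \(\Delta_X \le D \circ C\) means that for every \(x \in X\) there is some \(y \in Y\) with \((x,y) \in C\) and \((y,x) \in D\); forgetting the clause about \(D\), this is exactly condition (1) of Definition \ref{definecorresponodence}. Symmetrically, \(\Delta_Y \le C \circ D\) forces, for each \(y \in Y\), the existence of \(x \in X\) with \((x,y) \in C\), which is condition (2). Hence \(C\) is a correspondence.

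For the converse I would take \(D = C^t\) and check the two inequalities. Expanding the composition gives \(C^t \circ C = \{(x,x') \mid \exists\, y : (x,y) \in C \text{ and } (x',y) \in C\}\), so to witness \((x,x) \in C^t \circ C\) it is enough to use the point \(y\) supplied by totality and set \(x' = x\); this is where condition (1) is used. Dually, \(C \circ C^t = \{(y,y') \mid \exists\, x : (x,y) \in C \text{ and } (x,y') \in C\}\), and \((y,y) \in C \circ C^t\) follows from surjectivity (condition (2)) by setting \(y' = y\).

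I do not expect a genuine obstacle: the only place to be careful is tracking the direction of the pairs when transposing inside the two composites \(D \circ C\) and \(C \circ D\), so that the membership conditions on \(C^t\) are correctly rewritten as conditions on \(C\). It is worth noting that the argument proves slightly more than asked, since the transpose \(C^t\) always serves as a witness \(D\); thus being a correspondence is equivalent to admitting this particular lax inverse.
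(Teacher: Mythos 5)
Your proof is correct and follows essentially the same route as the paper: the forward direction takes \(D = C^t\) and verifies \(\Delta_X \le C^t \circ C\) and \(\Delta_Y \le C \circ C^t\) directly from the two conditions of Definition \ref{definecorresponodence}, and the converse unwinds \((x,x) \in D \circ C\) and \((y,y) \in C \circ D\) exactly as the paper does. Your closing observation that the transpose always suffices as the witness is also implicit in the paper's argument.
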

\begin{proof}
  By definition of a correspondence, for every \(x \in X\), there
  exists \(y \in Y\) so that \((x,y) \in C\).  This means that
  \(\Delta_X \subseteq  C^t \circ C\), where
  \begin{displaymath}
    C^t \circ C = \{ (x,z) \in X \times X \, \mid \,
    \exists \, y \in Y : 
    (x,y) \in C \text{ and } (y,x) \in C^t 
    \}.
  \end{displaymath}
  Reversing the roles of \(C\) and \(C^t\) we get the inclusion
  \(\Delta_Y \subseteq C \circ C^t\).
  Conversely, if \(C\) and \(D\) are relations with \(\Delta_Y
  \subseteq C \circ D\), then for every \(y 
  \in Y\), the element \((y,y)\) is contained in \(C \circ D\). This means
  that there exists \(x \in X\) so that \((x,y) \in C\), and \((y,x)
  \in D\). In particular, for every \(y \in Y\), there exists \(x \in
  X\) so that \((x,y) \in C\). Reversing the roles of \(C\) and
  \(D\) we get that for every \(x \in X\) there exists \(y \in Y\) so
  that \((x,y) \in C\).
\end{proof}

\section{The category of relations}
\label{sec:relcat}
We start by recalling Dowker's definition of the nerve of a
relation. (Called the complex \(K\) in \cite[Section 1]{MR0048030}.)
\begin{definition}
  Let \(R \subseteq X \times Y\) be a relation. The {\em nerve} of
  \(R\) is the simplicial complex
  \begin{displaymath}
    NR = \{ \text{ finite } \sigma \subseteq X \, \mid \, \exists
    \text{ \(y \in Y\) with \((x,y) \in R\) for all \(x \in
      \sigma\)}\}.  
  \end{displaymath}
\end{definition}
\begin{example}
  Let \(X\) be a space, and let \(Y\) be a cover of \(X\). In particular
  every element \(y \in Y\) is a subset of \(X\). Let \(R\) be the relation 
  \(R \subseteq X \times Y\) consisting of pairs \((x,y)\) with \(x \in y\). 
  A direct inspection reveals that the nerve of \(R\) is equal to
  the Borsuk Nerve of the cover \(Y\). 
\end{example}

\begin{definition}
  The \(2\)-category \(\relcat\) of relations has as objects
  the class 
  of relations. A morphism \(C \colon R \to R'\) in \(\relcat\)
  between 
  relations \(R \subseteq X \times Y\) and \(R' \subseteq X' \times
  Y'\) consists of a relation \(C \subseteq X \times X'\) such that
  for every \(\sigma \in NR\), the set 
  \begin{displaymath}
    (NC)(\sigma) = 
      \{ x' \in X' \, \mid \, \text{ there exists } x \in \sigma \text{
        with } (x,x') \in C\} 
  \end{displaymath}
  is an element \((NC) (\sigma) \in NR'\) of the nerve of \(R'\). In
  particular \((NC) (\sigma)\) is finite and non-empty.
  The class of \(2\)-cells in \(\relcat\) is the class of inclusions
  \(R \subseteq S\) for \(R,S \subseteq X \times Y\). Composition in
  \(\relcat\) is given 
  by composition of relations.
\end{definition}
\begin{lemma}
  Let \(C_1, C_2 \colon R \to R'\) be morphisms in \(\relcat\). If
  there exists a \(2\)-cell \(\alpha \colon C_1 \to C_2\), then
  the simplicial maps \(NC_1\) and \(NC_2\) are contiguous. In
  particular, their geometric realizations are homotopic maps.
\end{lemma}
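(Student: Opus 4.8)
The plan is to realize each morphism $C_i$ as an honest simplicial map by choosing vertices, and then to exploit the inclusion $C_1 \subseteq C_2$ supplied by the $2$-cell $\alpha$ in order to place the images of both maps inside a \emph{single} simplex of $NR'$, which is exactly what contiguity requires.

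First I would fix the vertex maps underlying $NC_1$ and $NC_2$. Writing $R \subseteq X \times Y$ and $R' \subseteq X' \times Y'$, note that for each $i \in \{1,2\}$ and each vertex $x \in X$ the set $(NC_i)(\{x\}) = \{x' \in X' \mid (x,x') \in C_i\}$ is, by the definition of a morphism in $\relcat$, a non-empty element of $NR'$; hence I may select $c_i(x) \in X'$ with $(x, c_i(x)) \in C_i$. Since $c_i(\sigma) \subseteq (NC_i)(\sigma) \in NR'$ for every $\sigma \in NR$ and $NR'$ is closed under passage to subsets, each $c_i$ determines a genuine simplicial map $NR \to NR'$, and it is such a choice that the notation $NC_i$ denotes. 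The point to keep in view is that the homotopy conclusion will not depend on which representatives are taken.

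Next I would verify contiguity. Recall that a $2$-cell $\alpha \colon C_1 \to C_2$ in $\relcat$ is precisely an inclusion $C_1 \subseteq C_2$ of relations, and that this forces $(NC_1)(\sigma) \subseteq (NC_2)(\sigma)$ for every $\sigma$: any $x'$ witnessed by some $x \in \sigma$ with $(x,x') \in C_1$ also satisfies $(x,x') \in C_2$. Now fix $\sigma \in NR$. By construction $c_2(\sigma) \subseteq (NC_2)(\sigma)$, while $(x, c_1(x)) \in C_1 \subseteq C_2$ for each $x \in \sigma$ gives $c_1(\sigma) \subseteq (NC_2)(\sigma)$ as well. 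Therefore
\[
  c_1(\sigma) \cup c_2(\sigma) \subseteq (NC_2)(\sigma) \in NR',
\]
and since $NR'$ is a simplicial complex this union is itself a simplex of $NR'$. This is exactly the defining condition for $NC_1$ and $NC_2$ to be contiguous. The final clause is then the standard fact that contiguous simplicial maps have homotopic geometric realizations, which I would simply invoke.

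I expect the genuinely substantive point to be a single structural observation rather than any computation: the $2$-cell gives an inclusion in a definite direction, so the \emph{larger} relation $C_2$ produces the simplex $(NC_2)(\sigma)$ that swallows the images of both chosen vertex maps. Once both images are seen to lie in this one simplex, contiguity is immediate and the choice of representatives becomes irrelevant. The only care needed is bookkeeping: confirming non-emptiness so the choice functions exist, and confirming that $C_1 \subseteq C_2$ propagates from the level of relations to the level of the sets $(NC_i)(\sigma)$.
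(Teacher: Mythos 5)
Your proof is correct and takes essentially the same route as the paper: both arguments hinge on the single observation that the \(2\)-cell \(C_1 \subseteq C_2\) forces \((NC_1)(\sigma) \subseteq (NC_2)(\sigma)\) for every \(\sigma \in NR\), so the union of the two images is contained in the simplex \((NC_2)(\sigma) \in NR'\), which is exactly the contiguity condition. The only difference is one of formalization: the paper verifies the contiguity condition directly for the set-valued assignments \(\sigma \mapsto (NC_i)(\sigma)\) and cites Spanier, whereas you first rigidify each \(NC_i\) to an honest vertex map by a selection \(c_i(x) \in (NC_i)(\{x\})\) --- a harmless and if anything more careful reformulation, since applying your own argument with \(C_1 = C_2\) shows any two selections are contiguous, so the homotopy class is independent of the choice.
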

\begin{proof}
  Let \(\sigma \in NR\). Since \(C_1 \subseteq C_2\), we have an
  inclusion
  \[(NC_1)(\sigma) \subseteq (NC_2)(\sigma),\]
  and thus
  \((NC_2)(\sigma) \in NR'\) implies
  \begin{displaymath}
    (NC_1)(\sigma) \cup (NC_2)(\sigma) = (NC_2)(\sigma) \in NR'.
  \end{displaymath}
  This shows that \(NC_1\) and \(NC_2\) are contiguous.
  For the statement about contiguous maps having homotopic
  realizations see \cite[Lemma 2, p. 130]{Spanier}.
\end{proof}
\begin{definition}
  The {\em nerve functor} \(N \colon \relcat \to \hcx\) is the functor
  taking a relation \(R\) to its nerve \(NR\) and taking a morphism
  \(C \colon R \to R'\) in \(\relcat\) to the morphism \(|NC| \colon
  |NR| \to |NR'|\) in \(\hcx\).   
\end{definition}
Let us emphasize that if \(\alpha \colon C_1 \to C_2\) is a \(2\)-cell in
\(\relcat\), then \(|NC_1| = |NC_2|\) in \(\hcx\). 

\section{Filtered Relations and Dowker dissimilarities}
\label{sec:filtrel}
\label{sec:dowkerdissimilarities}

\begin{definition}
  A {\em filtered relation} is a functor from \(\rstar\) to \(\relcat\).
  We define the \(2\)-category of filtered relations to
  be the 
  \(2\)-category \(\filrel\) of functors from \(\rstar\) to \(\relcat\). 
\end{definition}
\begin{definition}
  The {\em filtered nerve functor} is the functor
  \[N \colon \filrel
    \to \filhtop\]
  from the \(2\)-category of filtered relations to the
  category of homotopy filtered spaces taking \(X \colon \rstar \to
  \relcat\) to the composition
  \begin{displaymath}
    \rstar \xto X \relcat \xto N \htop.
  \end{displaymath}
\end{definition}
From Lemma \ref{inducedinterleaving} we get:
\begin{corollary}\label{inducedcorrolaryfilteredrel}
  If \(R\) and \(R'\) are  \((\alpha,\alpha')\)-interleaved filtered
  relations, then \(NR\) and \(NR'\) are 
  \((\alpha,\alpha')\)-interleaved filtered simplicial complexes.  
\end{corollary}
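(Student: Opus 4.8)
The plan is to read off this corollary as a direct instance of the Functoriality Lemma (Lemma \ref{inducedinterleaving}), applied to the nerve functor \(N \colon \relcat \to \hcx\). The only real work is to see that \(N\) qualifies as the functor \(H\) appearing in that lemma.

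First I would check that \(N\) is genuinely a functor of \(2\)-categories in the sense of Section \ref{sec:twocats}. Its action on objects and on morphisms is exactly the data packaged in the definition of the nerve functor above, and compatibility with composition and identities is part of that definition. The one point requiring attention is the behaviour on \(2\)-cells. Every \(2\)-cell in \(\relcat\) is an inclusion \(C_1 \subseteq C_2\), and by the lemma immediately preceding the definition of the nerve functor we have \(|NC_1| = |NC_2|\) in \(\hcx\). Since \(\hcx\) is regarded as a \(2\)-category with only identity \(2\)-cells, \(N\) necessarily sends each such inclusion to the identity \(2\)-cell, and the two coherence diagrams for a functor of \(2\)-categories then commute trivially. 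Hence \(N \colon \relcat \to \hcx\) is a functor of \(2\)-categories.

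Next I would recall that, by construction of the filtered nerve functor, the filtered simplicial complex \(NR\) attached to a filtered relation \(R \colon \rstar \to \relcat\) is precisely the post-composition \(N \circ R\), that is, the image of the filtered object \(R\) of \(\relcat\) under \(N\); likewise for \(NR'\). With this identification in hand, I would apply Lemma \ref{inducedinterleaving} with \(\CC = \relcat\), \(\DD = \hcx\) and \(H = N\): since \(R\) and \(R'\) are \((\alpha,\alpha')\)-interleaved filtered objects in \(\relcat\), the lemma gives that \(NR = N \circ R\) and \(NR' = N \circ R'\) are \((\alpha,\alpha')\)-interleaved filtered objects in \(\hcx\), which is the claim.

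There is no genuine obstacle to overcome here; the entire mathematical content has already been deposited, on the one hand, in the Functoriality Lemma and, on the other, in the fact that the nerve construction descends to a functor of \(2\)-categories. The single subtle point is precisely that last descent—namely that \(N\) collapses the inclusion \(2\)-cells of \(\relcat\) to identities in \(\hcx\)—and this is exactly the observation recorded in the remark just before the definition of the nerve functor, ultimately resting on the fact that contiguous simplicial maps have homotopic geometric realizations.
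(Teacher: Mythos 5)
Your proof is correct and follows essentially the same route as the paper, which derives the corollary immediately from Lemma \ref{inducedinterleaving} applied to the filtered nerve functor. The one detail you spell out---that \(N \colon \relcat \to \hcx\) is a genuine functor of \(2\)-categories because it collapses inclusion \(2\)-cells to identities, resting on contiguity of \(NC_1\) and \(NC_2\)---is precisely the content the paper has already recorded in the lemma and remark of Section \ref{sec:relcat}, so you are filling in the same tacit justification rather than taking a different path.
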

\begin{definition}\label{filtereddowkermorphism}
  A {\em Dowker dissimilarity} \(\Lambda\) consists of two sets \(L\) and \(W\)
  and a function \(\Lambda \colon L \times W \to \rstar\).
  Given \(t \in \rstar\), we let
  \begin{displaymath}
    \Lambda_t =
    \{(l,w) \in L \times W \, \mid \, \Lambda(l,w) < t\}
  \end{displaymath}
  considered as an object of the category \(\relcat\) of relations,
  and given \(s \le t\) in 
  \(\rstar\) we let 
  \begin{displaymath}
    \Lambda_{s \le t} = \Delta_{L}
  \end{displaymath}
  considered as a morphism \(\Lambda_{s \le t} \colon \Lambda_s
  \to \Lambda_t\) in \(\relcat\).
\end{definition}
\begin{definition}
  The {\em filtered relation associated to} a Dowker dissimilarity
  \(\Lambda \colon L \times W \to \rstar\) is the functor
  \begin{displaymath}
    \Lambda \colon \rstar \to \relcat
  \end{displaymath}
  taking \(t \in \rstar\) to the relation \(\Lambda_t\) and taking \(s
  \le t\) in \(\rstar\) to the morphism \(\Lambda_{s \le t}\) in
  \(\relcat\).
\end{definition}
\begin{definition}
  Let \(\Lambda \colon L \times W \to \rstar\) and
  \(\Lambda' \colon L' \times W' \to \rstar\) be Dowker dissimilarities.
  A morphism \(C \colon \Lambda \to \Lambda'\) of filtered relations
  is a {\em morphism of Dowker
  dissimilarities} if there exists a relation
  \(C \subseteq L \times L'\) so that \(C_t = C \colon
  \Lambda_t \to \Lambda'_t\) for every \(t \in \rstar\).
\end{definition}
\begin{definition}\label{categorydow}
  The {\em \(2\)-category \(\dow\) of Dowker dissimilarities} is the
  \(2\)-category 
  with Dowker 
  dissimilarities as objects and morphisms of Dowker dissimilarities
  as morphisms. 
  Given morphisms \(C_1, C_2 \colon \Lambda \to \Lambda'\) of Dowker
  dissimilarities, we define the set of \(2\)-cells \(\alpha \colon C_1
  \to C_2\) in \(\dow\) by letting  \(\dow(C_1,C_2) = \filrel(C_1,C_2)\).
\end{definition}
\begin{definition}
  Let \(\Lambda \colon L \times W \to \rstar\) be a Dowker
  dissimilarity.
  The {\em Dowker Nerve} \(N \Lambda\) of \(\Lambda\) is the
  filtered nerve of the underlying filtered relation.
\end{definition}

Note that the Dowker Nerve is filtered by
inclusion of sub-complexes, 
that is, if \(s \le t\), then \(N\Lambda_{s \le t} \colon N\Lambda_s
\to N\Lambda_t\) is an inclusion of simplicial complexes.
\begin{definition}\label{definecovrad}
  The {\em cover radius} of a Dowker dissimilarity
  \[\Lambda \colon L
    \times W \to \rstar\]
  is
  \begin{displaymath}
    \rho_\Lambda = \sup_{w \in W} \inf_{l \in L} \Lambda(l,w).
  \end{displaymath}
\end{definition}
\begin{definition}
  Let \(\Lambda \colon L \times W \to \rstar\) be a Dowker
  dissimilarity. Given \(l \in L\) and \(t > 0\), the {\em
    \(\Lambda\)-ball of radius \(t\) centered at \(l\)} is
  \begin{displaymath}
    B_{\Lambda}(l,t) = \{ w \in W\, \mid \, \Lambda(l,w) < t\}.
  \end{displaymath}
\end{definition}

\begin{example}\label{kmeans_example}
  Let \((M, d)\) be a metric space and \(L\) and \(W\) be subsets of
  \(M\).  Then the restriction \(\Lambda \colon L \times W \to \rstar\) of
  \(d\) to \(L \times W\) is a Dowker
  dissimilarity. Its cover radius \(\rho_{\Lambda} = \sup_{w \in W} \inf_{l
    \in L} d(l,w)\) is the directed Hausdorff distance from \(W\) to
  \(L\). The Dowker Nerve of \(\Lambda\) is the composite
  \begin{displaymath}
    \rstar \xto \Lambda \relcat \xrightarrow N\cx
  \end{displaymath}
  taking \(t \in \rstar\) to
  \begin{displaymath}
    \{ \text{ finite } \sigma \subseteq L \, \mid \, \text{ there
    exists \(w \in W\) with \(d(l,w) < t \) for all \(l \in \sigma\)}\}.    
  \end{displaymath}
  If \(L = W = M\), then the \(\Lambda\)-ball of radius \(t\) centered at
  \(l\) is the 
  usual open ball in \(M\) of radius \(t\) centered at \(l\) and the
  Dowker Nerve of \(\Lambda\) is equal to the \v Cech complex \(\cech(M)\).
\end{example}

\begin{lemma}
  Let \(\Lambda \colon L \times W \to \rstar\) be a Dowker
  dissimilarity. Given \(t > 0\), the nerve \(N\Lambda_t\) is
  isomorphic to the Borsuk Nerve of the cover of the set
  \begin{displaymath}
    \bigcup_{l \in L} B_{\Lambda}(l,t)
  \end{displaymath}
  by
  balls \(B_{\Lambda}(l,s)\) of radius \(s \le t\) centered at points in \(L\). 
\end{lemma}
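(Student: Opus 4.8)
The plan is to unwind both definitions until they coincide, producing an explicit simplicial isomorphism. Set \(U = \bigcup_{l \in L} B_\Lambda(l,t)\) and let \(\mathcal V = (B_\Lambda(l,t))_{l \in L}\) be the family of radius-\(t\) balls centred at points of \(L\). Since every ball \(B_\Lambda(l,s)\) with \(s \le t\) satisfies \(B_\Lambda(l,s) \subseteq B_\Lambda(l,t) \subseteq U\), the maximal balls making up \(\mathcal V\) are the cover of \(U\) relevant at filtration level \(t\). I would take the tautological indexing \(\phi \colon l \mapsto B_\Lambda(l,t)\) as the candidate correspondence on vertices.

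The core of the proof is a single chain of equivalences, valid for every finite \(\sigma \subseteq L\). Unravelling the nerve of the relation \(\Lambda_t\), we have \(\sigma \in N\Lambda_t\) if and only if there is some \(w \in W\) with \((l,w) \in \Lambda_t\), that is \(\Lambda(l,w) < t\), for every \(l \in \sigma\). By the definition of the \(\Lambda\)-ball this is the statement that \(w \in B_\Lambda(l,t)\) for all \(l \in \sigma\), i.e.\ \(\bigcap_{l \in \sigma} B_\Lambda(l,t) \ne \emptyset\), and any such \(w\) automatically lies in \(U\). But nonemptiness of \(\bigcap_{l \in \sigma} B_\Lambda(l,t)\) is exactly the condition for \(\{B_\Lambda(l,t) : l \in \sigma\}\) to span a simplex of the Borsuk Nerve of \(\mathcal V\). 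Thus \(\sigma \in N\Lambda_t\) precisely when \(\phi(\sigma)\) is a simplex of the Borsuk Nerve, in both directions.

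It remains to promote \(\phi\) to an isomorphism of simplicial complexes, and here lies the only genuine subtlety. The assignment \(l \mapsto B_\Lambda(l,t)\) need not be injective: two distinct centres may determine the same ball and hence the same vertex of the Borsuk Nerve, while staying distinct in \(N\Lambda_t\). I would circumvent this precisely as in the cover example following the definition of the nerve, by regarding the Borsuk Nerve as the nerve of the cover viewed as a family indexed by \(L\) — equivalently, as the nerve of the relation \(\{(l,w) : w \in B_\Lambda(l,t)\} \subseteq L \times W\), which is none other than \(\Lambda_t\). With this convention the two complexes share the vertex set \(L\), and the chain of equivalences above upgrades from a bijection of simplices to an outright equality of simplicial complexes, which is in particular the desired isomorphism. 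One should also record the trivial observation that \(\mathcal V\) does cover \(U\), so that the phrase ``Borsuk Nerve of the cover of \(U\)'' is well posed; this is immediate from the definition of \(U\) as the union of the members of \(\mathcal V\).

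I expect the main obstacle to be combinatorial bookkeeping rather than anything substantive: fixing the indexing convention so that the vertex correspondence is a bijection instead of a mere surjection, and confirming that the notion of Borsuk Nerve in play is the relation-nerve of the Example. Once the cover is taken as the \(L\)-indexed family of radius-\(t\) balls, both complexes have vertex set \(L\) and, by the displayed equivalences, identical simplices, so no further work is needed.
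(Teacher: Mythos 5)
Your proof is correct, and since the paper states this lemma without any proof, your definitional unwinding --- \(\sigma \in N\Lambda_t\) if and only if \(\bigcap_{l \in \sigma} B_{\Lambda}(l,t) \ne \emptyset\), with the Borsuk Nerve read as the nerve of the \(L\)-indexed cover, i.e.\ of the relation \(\{(l,w) \mid w \in B_{\Lambda}(l,t)\} = \Lambda_t\) itself, exactly as in the paper's cover example --- is precisely the implicit argument the authors leave to the reader. Your handling of the indexing subtlety is moreover the right reading of the statement's loose phrase ``balls \(B_{\Lambda}(l,s)\) of radius \(s \le t\)'': treating all such balls for varying \(s\) as separate cover elements would in general destroy the isomorphism (distinct balls \(B_{\Lambda}(l,s) \subsetneq B_{\Lambda}(l,t)\) would contribute extra vertices), so the \(L\)-indexed family of radius-\(t\) balls is the only interpretation under which the lemma holds, and it is the one your argument establishes.
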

Corollary \ref{inducedcorrolaryfilteredrel} gives:
\begin{corollary}\label{nerveinterleavedcorr}
  If \(\Lambda \colon L \times W \to \rstar\) and \(\Lambda' \colon L'
  \times W' \to \rstar\) are \((\alpha,\alpha')\)-interleaved Dowker
  dissimilarities, then \(N\Lambda\) and \(N\Lambda'\) are
  \((\alpha,\alpha')\)-interleaved filtered simplicial complexes.  
\end{corollary}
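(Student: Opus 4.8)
The plan is to read the result straight off Corollary \ref{inducedcorrolaryfilteredrel}. The only thing to arrange is the passage from the language of Dowker dissimilarities to that of filtered relations: every Dowker dissimilarity is handled through its associated filtered relation, the Dowker Nerve $N\Lambda$ is \emph{defined} to be the filtered nerve of that filtered relation, and the $2$-category $\dow$ has been set up precisely so that an interleaving of Dowker dissimilarities is, underneath, an interleaving of the associated filtered relations. So the whole proof is a definitional unwinding followed by a single citation.

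Concretely, I would unwind the hypothesis. An $(\alpha,\alpha')$-interleaving of $\Lambda$ and $\Lambda'$ consists of morphisms of Dowker dissimilarities $F \colon \Lambda \to \alpha^*\Lambda'$ and $F' \colon \Lambda' \to \alpha'^*\Lambda$ together with the two connecting $2$-cells demanded by the definition of an interleaving. By the definition of a morphism of Dowker dissimilarities, $F$ and $F'$ are in particular morphisms of the associated filtered relations; and by Definition \ref{categorydow}, which sets $\dow(C_1,C_2) = \filrel(C_1,C_2)$, the two connecting $2$-cells are literally $2$-cells in $\filrel$. Hence this very same data exhibits the associated filtered relations of $\Lambda$ and $\Lambda'$ as $(\alpha,\alpha')$-interleaved objects of $\filrel$, with no additional structure to supply.

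With that in hand I apply Corollary \ref{inducedcorrolaryfilteredrel} to these interleaved filtered relations, concluding that their filtered nerves are $(\alpha,\alpha')$-interleaved filtered simplicial complexes; since those filtered nerves are $N\Lambda$ and $N\Lambda'$ by the definition of the Dowker Nerve, the claim follows. I do not anticipate a genuine obstacle here. The entire content is the formal observation that the morphisms and $2$-cells composing an interleaving of Dowker dissimilarities are already morphisms and $2$-cells of filtered relations, so there is no fresh naturality or homotopy-coherence verification to perform beyond what is already packaged into Corollary \ref{inducedcorrolaryfilteredrel}, itself an instance of Functoriality (Lemma \ref{inducedinterleaving}) for the nerve functor $N$. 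The one point to state carefully, to avoid a category error, is that $\Lambda$ and $\Lambda'$ are being interleaved as filtered objects in $\relcat$ via their associated filtered relations, not as filtered objects in $\dow$; once this is made explicit the argument is immediate.
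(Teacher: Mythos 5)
Your proposal is correct and matches the paper's proof exactly: the paper derives Corollary \ref{nerveinterleavedcorr} directly from Corollary \ref{inducedcorrolaryfilteredrel} (itself an instance of Functoriality, Lemma \ref{inducedinterleaving}, for the nerve functor), with the definitional unwinding left implicit. Your explicit check that an interleaving in \(\dow\) is literally an interleaving in \(\filrel\) --- since \(\dow(C_1,C_2) = \filrel(C_1,C_2)\) --- is precisely the content the paper takes for granted.
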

\begin{definition}
  Let \(\Lambda \colon L \times W \to \rstar\) be a Dowker
  dissimilarity. The {\em Rips complex} of \(\Lambda\) is the
  filtered simplicial complex \(R\Lambda\) defined by
  \begin{displaymath}
    (R\Lambda)(t) = \{ \text{finite } \sigma \subseteq L \, \mid \,
    \text{every \(\tau \subseteq \sigma\) with \(|\tau| \le 2\) is
      in \((N\Lambda)(t)\)}\}.
  \end{displaymath}
\end{definition}
\begin{corollary}
  If \(\Lambda \colon L \times W \to \rstar\) and \(\Lambda' \colon L'
  \times W' \to \rstar\) are \((\alpha,\alpha')\)-interleaved Dowker
  dissimilarities, then \(R\Lambda\) and \(R\Lambda'\) are
  \((\alpha,\alpha')\)-interleaved filtered simplicial complexes.  
\end{corollary}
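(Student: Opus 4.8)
The plan is to mimic the construction of the nerve functor $N \colon \relcat \to \hcx$ and the derivation of Corollary \ref{nerveinterleavedcorr}, replacing the nerve by the Rips complex throughout. Concretely, I would first upgrade the assignment sending a relation to its Rips complex to a functor of $2$-categories $R \colon \relcat \to \hcx$, and then observe that $R\Lambda$ is exactly the composite $\rstar \xrightarrow{\Lambda} \relcat \xrightarrow{R} \hcx$. With that in hand, the statement follows from the functoriality of interleavings (Lemma \ref{inducedinterleaving}) applied to $R$, in precisely the same way that Corollary \ref{nerveinterleavedcorr} follows for $N$.

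The one step requiring genuine verification — and the step I expect to be the main obstacle — is that a morphism $C \colon R \to R'$ in $\relcat$, given by a relation $C \subseteq X \times X'$, induces a map of Rips complexes. For this I would show that if $\sigma$ is a simplex of the Rips complex $RR$, then $(NC)(\sigma) = \{x' \mid \exists\, x \in \sigma,\ (x,x') \in C\}$ is a simplex of $RR'$. Given two vertices $x_1', x_2' \in (NC)(\sigma)$, choose preimages $x_1, x_2 \in \sigma$ with $(x_i, x_i') \in C$. Since $\sigma \in RR$, the edge $\{x_1, x_2\}$ lies in $NR$; applying the defining property of the morphism $C$ to this edge gives $(NC)(\{x_1, x_2\}) \in NR'$, and since $\{x_1', x_2'\} \subseteq (NC)(\{x_1, x_2\})$ while $NR'$ is closed under passage to subsets, we get $\{x_1', x_2'\} \in NR'$. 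Thus every edge of $(NC)(\sigma)$ lies in $NR'$, so $(NC)(\sigma) \in RR'$. Finiteness and non-emptiness of $(NC)(\sigma)$ follow by applying the same property to the singletons and edges of $\sigma$, all of which are simplices of $NR$ because $\sigma \in RR$. The point worth emphasizing is that only the condition on $\le 2$-element simplices of the morphism $C$ is used here, which is exactly the data that the Rips complex remembers.

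Having checked this, the remaining steps are formal and parallel the treatment of $N$. Choosing for each vertex $x$ an element $c(x)$ with $(x, c(x)) \in C$ yields a simplicial map $RR \to RR'$, and any two such choices are contiguous in $RR'$ because their images on each $\sigma \in RR$ both lie in the simplex $(NC)(\sigma)$; hence $R$ gives a well-defined morphism $|RC| \colon |RR| \to |RR'|$ in $\hcx$. The same contiguity argument shows that a $2$-cell $C_1 \subseteq C_2$ is sent to an equality $|RC_1| = |RC_2|$ in $\hcx$, so $R \colon \relcat \to \hcx$ is indeed a functor of $2$-categories.

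Finally I would conclude along the route used for Corollary \ref{inducedcorrolaryfilteredrel} and Corollary \ref{nerveinterleavedcorr}: an $(\alpha,\alpha')$-interleaving of the Dowker dissimilarities $\Lambda$ and $\Lambda'$ is in particular an $(\alpha,\alpha')$-interleaving of the underlying filtered relations $\Lambda, \Lambda' \colon \rstar \to \relcat$, and applying Lemma \ref{inducedinterleaving} to the functor $R \colon \relcat \to \hcx$ transports it to an $(\alpha,\alpha')$-interleaving of $R\Lambda = R \circ \Lambda$ and $R\Lambda' = R \circ \Lambda'$. I expect everything outside the second paragraph to be a verbatim repetition of the nerve case.
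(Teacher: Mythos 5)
Your proposal is correct and is essentially the paper's own argument: the paper proves this corollary in one line by citing Corollary \ref{nerveinterleavedcorr} together with the fact that the Rips complex depends functorially on the one-skeleton of the Dowker Nerve, and your key verification (that a morphism \(C\) in \(\relcat\) sends Rips simplices to Rips simplices because only the \(\le 2\)-element simplex condition is used) is exactly the detailed content of that one-skeleton remark. The only difference is packaging --- you build the functor \(R \colon \relcat \to \hcx\) directly and invoke Lemma \ref{inducedinterleaving}, whereas the paper factors through the already-established nerve interleaving --- which is not a genuinely different route.
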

\begin{proof}
  Use Corollary \ref{nerveinterleavedcorr} and the fact that the Rips
  complex depends functorially on the one skeleton of the Dowker Nerve.
\end{proof}
The following definition is an instance of the generalized
inverse in \cite{MR3072795}.
\begin{definition}\label{generalizedinverse}
  Let \(\alpha \colon \rstar \to \rstar\) be order preserving with
  \[\lim_{t \to \infty}\alpha(t) \infty.\] 
  The
  generalized inverse function \(\alpha^{\leftarrow} \colon \rstar \to
  \rstar\) is the order preserving function 
  \begin{displaymath}
    \alpha^{\leftarrow}(s) = \inf\{t \in \rstar \, \mid \, \alpha (t) \ge s\}.
  \end{displaymath}
\end{definition}
\begin{lemma}
  Given a Dowker dissimilarity \(\Lambda \colon L
  \times W \to \rstar\) and an order preserving function \(\alpha \colon
  \rstar \to \rstar\), the filtered relation associated to the Dowker
  dissimilarity \(\Lambda\) given as the composite function
  \begin{displaymath}
    L \times W \xto \Lambda \rstar \xto {\alpha^{\leftarrow}} \rstar,
  \end{displaymath}
  is equal to \(\alpha^* \Lambda\).
\end{lemma}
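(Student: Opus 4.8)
\emph{Set-up and reduction to objects.}
The plan is to show the two functors $\rstar \to \relcat$ agree on objects and on morphisms. Write $\mu = \alpha^{\leftarrow} \circ \Lambda \colon L \times W \to \rstar$ for the composite Dowker dissimilarity, so that the left-hand side of the claimed equality is the filtered relation associated to $\mu$. On morphisms the agreement is immediate: by Definition \ref{filtereddowkermorphism} the filtered relation associated to $\mu$ sends each $s \le t$ to $\Delta_L$, while $\alpha^* \Lambda = \Lambda \circ \alpha$ sends $s \le t$ to $\Lambda_{\alpha(s) \le \alpha(t)} = \Delta_L$. Since all the relations in sight sit inside $L \times W$, both functors send every morphism of $\rstar$ to the same diagonal $\Delta_L$. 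It therefore suffices to prove that for each fixed $t \in \rstar$ the relations $\mu_t$ and $(\alpha^* \Lambda)_t = \Lambda_{\alpha(t)}$ coincide as subsets of $L \times W$.

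\emph{Reduction to a scalar equivalence.}
Unwinding the definitions, $(l,w) \in \mu_t$ means $\alpha^{\leftarrow}(\Lambda(l,w)) < t$, while $(l,w) \in \Lambda_{\alpha(t)}$ means $\Lambda(l,w) < \alpha(t)$. So, setting $s = \Lambda(l,w)$, the whole lemma reduces to the scalar Galois-type equivalence
\begin{displaymath}
  \alpha^{\leftarrow}(s) < t \iff s < \alpha(t) \qquad \text{for all } s,t \in \rstar ,
\end{displaymath}
relating $\alpha$ to its generalized inverse from Definition \ref{generalizedinverse}.

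\emph{Attacking the equivalence.}
I would argue directly from $\alpha^{\leftarrow}(s) = \inf\{u \mid \alpha(u) \ge s\}$. Two robust monotonicity facts come for free. First, if $s \le \alpha(t)$ then $t \in \{u \mid \alpha(u) \ge s\}$, whence $\alpha^{\leftarrow}(s) \le t$. Second, if $\alpha^{\leftarrow}(s) < t$ then by the definition of the infimum there is some $u < t$ with $\alpha(u) \ge s$, and monotonicity of $\alpha$ gives $s \le \alpha(u) \le \alpha(t)$. These already pin down the equivalence up to the boundary between $<$ and $\le$, and the remaining task is to upgrade both implications to the strict form above.

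\emph{Main obstacle.}
The step I expect to be the crux is exactly this passage between strict and non-strict inequalities. The estimates just given only yield $\alpha^{\leftarrow}(s) < t \Rightarrow s \le \alpha(t)$ and $s \le \alpha(t) \Rightarrow \alpha^{\leftarrow}(s) \le t$, so the strict equivalence genuinely depends on regularity of $\alpha$: a flat piece of $\alpha$ can make $\alpha^{\leftarrow}(s) < t$ while $s = \alpha(t)$, a left jump can make $s < \alpha(t)$ while $\alpha^{\leftarrow}(s) = t$, and the floor at the bottom $t = 0$ must be handled separately. I would therefore first isolate precisely which property of $\alpha^{\leftarrow}$ recorded in \cite{MR3072795} delivers the strict Galois equivalence (continuity and strict monotonicity are what make it clean), after which the rest of the proof is the routine unwinding above. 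In particular, for the functions $\alpha(t) = ct$ appearing in Theorem \ref{mainmetricthm} the generalized inverse is the genuine inverse $\alpha^{\leftarrow}(s) = s/c$ and the equivalence is immediate.
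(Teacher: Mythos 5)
Your reduction is correct, and it is worth noting at the outset that the paper offers no proof of this lemma at all, so your argument has to stand on its own. The agreement on morphisms via Definition \ref{filtereddowkermorphism} is right, as is the reduction to the scalar equivalence \(\alpha^{\leftarrow}(s) < t \iff s < \alpha(t)\), and your two non-strict estimates are both valid. The gap is the final step: you defer the strict equivalence to some unspecified property of the generalized inverse in \cite{MR3072795}, but no property recorded there can deliver it, because the strict equivalence --- and hence the lemma as literally stated --- is \emph{false} for a general order preserving \(\alpha\). The failure modes you list in your ``main obstacle'' paragraph are genuine counterexamples, not technical hurdles. Concretely, take \(\alpha(t) = \max(t,1)\), which is order preserving, satisfies \(t \le \alpha(t)\), and has \(\lim_{t \to \infty} \alpha(t) = \infty\) as required by Definition \ref{generalizedinverse}. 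Then \(\alpha^{\leftarrow}(1) = \inf\{u \mid \alpha(u) \ge 1\} = 0\), so for any Dowker dissimilarity with \(\Lambda(l,w) = 1\) the pair \((l,w)\) lies in \((\alpha^{\leftarrow} \circ \Lambda)_{1/2}\), while \(\Lambda(l,w) < \alpha(1/2) = 1\) fails, so \((l,w) \notin (\alpha^* \Lambda)_{1/2}\). Even the additive functions \(\alpha(t) = t + a\) central to the paper's interleavings fail at the single point \(t = 0\): there \((\alpha^{\leftarrow} \circ \Lambda)_0 = \emptyset\) while \((\alpha^* \Lambda)_0 = \Lambda_a\) need not be empty.

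The honest completion is therefore not to hunt for a lemma in the citation but to add regularity hypotheses, after which your own two estimates finish the proof in two lines. If \(\alpha\) is strictly increasing, then \(\alpha^{\leftarrow}(s) < t\) yields some \(u < t\) with \(\alpha(u) \ge s\), hence \(s \le \alpha(u) < \alpha(t)\); if \(\alpha\) is left-continuous at every \(t > 0\) and \(\alpha(0) = 0\), then \(s < \alpha(t) = \sup_{u < t} \alpha(u)\) yields some \(u < t\) with \(\alpha(u) \ge s\), hence \(\alpha^{\leftarrow}(s) \le u < t\). So the lemma is true, by exactly your argument, for \(\alpha\) strictly increasing, left-continuous and vanishing at \(0\) --- in particular for the multiplicative family \(\alpha(t) = ct\) that you single out, which is what Theorem \ref{mainmetricthm} and Proposition \ref{sparsecech} need --- whereas for general order preserving \(\alpha\) only your one-sided implications survive. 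It is telling that those one-sided statements (part \((5)\) of \cite[Proposition 1]{MR3072795}) are all the paper ever actually invokes elsewhere, e.g.\ in the proof of Lemma \ref{morphismintotruncation}. Your instinct about where the difficulty sits was exactly right; the flaw in the write-up is presenting a condition the statement genuinely requires as a citation to be looked up later, rather than either stating the extra hypotheses or exhibiting the counterexample showing they cannot be dispensed with.
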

\begin{definition}\label{trianglerelation}
  A {\em triangle relation} for a Dowker dissimilarity
  \[\Lambda \colon
    L \times W \to \rstar\]
  is a relation \(T \subseteq L \times W\)
  with the following properties:
  \begin{enumerate}
  \item For every \(w \in W\) there exists \(l \in L\) so that \((l,w)
    \in T\).
  \item 
    For all \((l,w) \in T\) and \((l',w') \in L
    \times W\), 
    the triangle inequality
    \begin{displaymath}
      \Lambda(l',w') \le \Lambda(l',w) + \Lambda(l,w') + \Lambda(l,w)
    \end{displaymath}
    holds.
  \end{enumerate}
\end{definition}
\begin{remark}
\hspace{0cm}
  \begin{enumerate}
  \item 
    If \(\Lambda_M \colon M \times M \to \rstar\) satisfies the triangle
    inequality
    \begin{displaymath}
      \Lambda_M(x,z) \le \Lambda_M(x,y) + \Lambda_M(y,z)
    \end{displaymath}
    for all \(x,y,z \in Z\), then every relation \(T \subseteq M \times
    M\) satisfies part \((2)\) of Definition
    \ref{trianglerelation}. Moreover, if \(L\) and \(W\) are subsets of
    \(M\) and \(\Lambda \colon L \times W \to M\) is the restriction of
    \(\Lambda_M\) to \(L \times W\), then every relation \(T \subseteq L
    \times W\) satisfies part \((2)\) of Definition
    \ref{trianglerelation}.
  \item 
    Given a Dowker dissimilarity \(\Lambda \colon L \times W \to
    \rstar\) so that the set \(\Lambda(L\times \{w\})\) has a least
    upper bound for every \(w \in 
    W\), there exists a triangle relation \(T\)
    for \(\Lambda\) 
    consisting of the pairs \((l,w)\) satisfying \(\Lambda(l',w) \le
    \Lambda(l,w)\) for all \(l' \in L\).
  \end{enumerate}

\end{remark}

\section{Stability and Interleaving Distance }
\label{sec:stability}

The functoriality of interleaving implies that all functorial
constructions are stable with respect to interleaving. In this section
we relate interleaving distance of Dowker dissimilarities to
Gromov--Hausdorff distance of \cite{MR0401069, MR623534} 
and to the
network distance defined in \cite{CM2016}

\begin{definition}
  Let \(C\) and \(C'\) be filtered objects in a \(2\)-category
  \(\CC\).
  \begin{enumerate}
  \item Given \(a,a' \in \rstar\) we say that the filtered objects
    \(C\) and \(C'\) are 
    {\em additively \((a,a')\)-interleaved} if they are
    \((\alpha,\alpha')\)-interleaved for the functions \(\alpha(t) = a
    + t\) and \(\alpha'(t) = a' + t\).
  \item Let
    \begin{displaymath}
      A(C,C') = \{a \in \rstar \, \mid \, \text{\(C\) and \(C'\) are
        additively \((a,a)\)-interleaved}\}.
    \end{displaymath}
    The {\em interleaving distance} of \(C\) and \(C'\) is
    \begin{displaymath}
      \intdist(C,C') =
      \begin{cases}
        \inf A(C,C') & \text{if \(A(C,C') \ne \emptyset\)} \\
        \infty & \text{otherwise.}
      \end{cases}
    \end{displaymath}
  \end{enumerate}
\end{definition}

\begin{definition}
  A {\em non-negatively weighted network} is a pair \((X, \omega_X)\) of a set \(X\) and a
  weight function \(\omega_X \colon X \times X \to [0,\infty)\).
\end{definition}
\begin{definition}
  Let \(\omega_X \colon X \times X \to
  [0,\infty)\) and \(\omega_{X'} \colon X' \times X' \to [0,\infty)\)
  be non-negatively weighted networks and let \(C \subseteq X \times X'\). The
  {\em distortion of 
    \(C\)} is
  \begin{displaymath}
    \dis(C) = \sup_{(x,x'),\ (y,y') \in C} |\omega_{X}(x,y) - \omega_{X'}(x',y')|.
  \end{displaymath}
\end{definition}
Recall from \ref{definecorresponodence} that \(C \subseteq X \times
X'\) is a correspondence if the 
projections of \(C\) on both \(X\) and \(X'\) are surjective.
\begin{definition}
  Let \(\omega_X \colon X \times X \to
  [0,\infty)\) and \(\omega_{X'} \colon X' \times X' \to [0,\infty)\)
  be non-negatively weighted networks and let \(\mathcal R\) be the set of
  correspondences \(C \subseteq X \times X'\). The {\em network
    distance} between \(X\) and \(X'\) is
  \begin{displaymath}
    \netwdist(X,X') = \frac 12 \inf_{C \in \mathcal R} \dis(C).
  \end{displaymath}
\end{definition}

The Stability Theorem \cite[Proposition 15]{CM2016}
for networks is a consequence
of functoriality of interleaving distance, the Algebraic Stability Theorem
for bottleneck distance \cite[Theorem 4.4]{HardStability} and the following
result:
\begin{proposition}\label{stabilityresult}
  Let \(\omega_X \colon X \times X \to [0,\infty)\) and 
  \(\omega_{X'} \colon X' \times X' \to [0,\infty)\) be networks, and write 
  \begin{displaymath}
    \Lambda \colon X \times X \to \rstar
    \quad \text{and} \quad
    \Lambda' \colon X' \times X' \to \rstar 
  \end{displaymath}
  for the corresponding Dowker dissimilarities with
  \(\Lambda(x,y) =
  {\omega_X(x,y)}\)
  and
  \(\Lambda'(x',y') =
  {\omega_{X'}(x',y')}\).
  Then
  \begin{displaymath}
    \intdist(\Lambda, \Lambda') \le 2 \netwdist(X,X').
  \end{displaymath}
\end{proposition}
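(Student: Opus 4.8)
The plan is to exhibit an explicit correspondence-based interleaving and then bound the required interleaving parameter by the distortion. By the definition of network distance, for any $a > \netwdist(X,X')$ there is a correspondence $C \subseteq X \times X'$ with $\dis(C) < 2a$. I will use this single relation $C$ (and its transpose $C^t$) to build morphisms of Dowker dissimilarities in both directions and show they form an additive $(2a,2a)$-interleaving; taking the infimum over $a$ then yields $\intdist(\Lambda,\Lambda') \le 2\netwdist(X,X')$.

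First I would verify that $C \colon \Lambda \to \alpha^*\Lambda'$ is a morphism of Dowker dissimilarities, where $\alpha(t) = t + 2a$. Concretely, for each $t$ the relation $C$ must send every $\sigma \in N\Lambda_t$ to an element $(NC)(\sigma) \in N\Lambda'_{t+2a}$. So I would take $\sigma \in N\Lambda_t$, meaning there is $y \in X$ with $\Lambda(x,y) < t$ for all $x \in \sigma$; choose $y' \in X'$ with $(y,y') \in C$ by surjectivity of the correspondence. For any $x' \in (NC)(\sigma)$, pick $x \in \sigma$ with $(x,x') \in C$. Since both $(x,x')$ and $(y,y')$ lie in $C$, the distortion bound gives
\begin{displaymath}
  \Lambda'(x',y') = \omega_{X'}(x',y') \le \omega_X(x,y) + \dis(C) < t + 2a,
\end{displaymath}
so $y'$ witnesses $(NC)(\sigma) \in N\Lambda'_{t+2a}$, which is exactly the morphism condition from Definition \ref{filtereddowkermorphism}. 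The same computation with $C^t$ in place of $C$ produces a morphism $C^t \colon \Lambda' \to \alpha^*\Lambda$.

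Next I would check the two interleaving conditions, namely the existence of $2$-cells $(\alpha \circ \alpha)_* \to (\alpha^* C^t) \circ C$ and symmetrically for $C \circ C^t$. Since $2$-cells in $\relcat$ are inclusions of relations, this reduces to showing $\Delta_X \subseteq C^t \circ C$ at the appropriate filtration level, and likewise $\Delta_{X'} \subseteq C \circ C^t$. But $C^t \circ C$ contains the diagonal precisely because $C$ is a correspondence: for each $x \in X$ there is $x' \in X'$ with $(x,x') \in C$, hence $(x,x) \in C^t \circ C$. The structural morphisms $\Lambda_{s \le t} = \Delta_X$ in a Dowker dissimilarity are exactly these diagonals, so the identity-versus-composite comparison is the inclusion $\Delta \subseteq C^t \circ C$, providing the needed $2$-cell. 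This establishes the additive $(2a,2a)$-interleaving of $\Lambda$ and $\Lambda'$.

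The main obstacle I anticipate is bookkeeping rather than conceptual: one must confirm that the single relation $C$ genuinely defines a morphism \emph{of Dowker dissimilarities} (i.e.\ that one relation works uniformly for all $t$, per the definition requiring $C_t = C$ for every $t$), and that the shift by the constant $2a$ is compatible with the generalized-inverse description of $\alpha^*$. The distortion inequality is exactly what forces the additive shift $2a$, and the correspondence property is exactly what supplies the diagonal inclusions; the delicate point is matching the one-sided distortion estimate $\omega_{X'}(x',y') \le \omega_X(x,y) + \dis(C)$ to the strict inequality $\Lambda' < t + 2a$ and ensuring no off-by-one issue at the boundary. Taking $a \downarrow \netwdist(X,X')$ and using that additive $(a,a)$-interleaving for all such $a$ gives $\intdist(\Lambda,\Lambda') \le 2\netwdist(X,X')$ completes the argument.
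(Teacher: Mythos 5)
Your proposal is correct and takes essentially the same route as the paper's proof: the same correspondence \(C\) and its transpose give morphisms \(\Lambda \to \alpha^*\Lambda'\) and \(\Lambda' \to \alpha^*\Lambda\) via the identical witness-and-distortion computation (pick \(y'\) with \((y,y')\in C\), then bound \(\Lambda'(x',y')\) for each \(x' \in (NC)(\sigma)\)), with the required \(2\)-cells supplied by the inclusions \(\Delta_X \subseteq C^t \circ C\) and \(\Delta_{X'} \subseteq C \circ C^t\), which the paper leaves implicit via its earlier correspondence lemma. The only cosmetic difference is parameterization: the paper fixes a correspondence \(C\) and \(a > \dis(C)\) to get an \((a,a)\)-interleaving, while you fix \(a > \netwdist(X,X')\) and get a \((2a,2a)\)-interleaving; both yield \(\intdist(\Lambda,\Lambda') \le 2\netwdist(X,X')\) after taking infima.
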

\begin{proof}
  We have to show that \(\intdist(\Lambda, \Lambda') \le \dis(C)\) for
  every correspondence \(C \subseteq X \times X'\). So let \(C
  \subseteq X \times X'\) be a correspondence and let \(a >
  \dis(C)\). By definition of \(\dis(C)\), for all \((l,l')\) and
  \((w, w')\) in \(C\) we have
  \begin{displaymath}
    | \omega_{X}(l,w) - \omega_{X'}(l',w')| < a.
  \end{displaymath}
  Defining \(\alpha \colon \rstar \to \rstar\) by \(\alpha(t) = t +
  a\), by symmetry, it suffices to show that \(C\) defines a morphism
  \begin{displaymath}
    C \colon \Lambda \to \alpha^* \Lambda'.
  \end{displaymath}
  That is, we have to show that if \(\sigma \in \Lambda_t\), then
  \((NC)(\sigma) \in \Lambda'_{\alpha t}\). So suppose that \(w \in
  X\) satisfies \(\Lambda(l,w) < t\) for all \(l \in \sigma\). Since \(C\)
  is a correspondence we can pick \(w' \in X'\) so that \((w,w') \in
  C\). By definition of \(NC\), for every \(l' \in (NC)(\sigma)\),
  there exists 
  \(l \in \sigma\) 
  so that \((l,l') \in C\). By definition of distortion distance this gives
  \begin{displaymath}
    \Lambda'(l',w') = \omega_{X'}(l',w') < a + \omega_X(l,w) = a +
    \Lambda(l,w) < a + t = \alpha t.
  \end{displaymath}
  We conclude that \(\sigma \in N\Lambda_t\) implies \((NC)(\sigma)
  \in N\Lambda'_{\alpha t}\) as desired.
\end{proof}
The Stability Theorem \cite[Theorem 5.2]{MR3275299}
for metric spaces is a consequence
of functoriality of interleaving distance, the Algebraic Stability Theorem
for bottleneck distance \cite[Theorem 4.4]{HardStability} and the following
result:
\begin{corollary}
  Let \((M,d)\) and \((M',d')\) be metric spaces, and write
  \begin{displaymath}
    \Lambda \colon M \times M \to \rstar
    \quad \text{and} \quad
    \Lambda' \colon M' \times M' \to \rstar 
  \end{displaymath}
  for the corresponding Dowker dissimilarities with \(\Lambda(p,q) =
  d(p,q)\) and \(\Lambda'(p',q') =
  d'(p',q')\). Then
  \begin{displaymath}
    \intdist(\Lambda, \Lambda') \le 2 d_{GH}(M,M').
  \end{displaymath}  
\end{corollary}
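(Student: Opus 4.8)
The plan is to reduce this statement to Proposition \ref{stabilityresult} by recognizing a metric space as a non-negatively weighted network and then identifying its network distance with its Gromov--Hausdorff distance. First I would observe that a metric space \((M,d)\) is in particular a non-negatively weighted network \((M,d)\), since the metric \(d \colon M \times M \to [0,\infty)\) is a weight function. Under this identification the Dowker dissimilarity \(\Lambda \colon M \times M \to \rstar\) associated to the metric space is literally the Dowker dissimilarity associated to the network, and likewise for \(\Lambda'\). Consequently Proposition \ref{stabilityresult} applies without modification and gives
\begin{displaymath}
  \intdist(\Lambda,\Lambda') \le 2\, \netwdist(M,M').
\end{displaymath}

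The remaining work is to verify that \(\netwdist(M,M') = d_{GH}(M,M')\). Both quantities are defined as one half of the infimum over correspondences of a distortion functional, and once the weight functions are taken to be the metrics the distortion is the same expression,
\begin{displaymath}
  \dis(C) = \sup_{(x,x'),\,(y,y') \in C} |d(x,y) - d'(x',y')|.
\end{displaymath}
Here I would invoke the classical characterization of Gromov--Hausdorff distance in terms of correspondences, as recorded in the cited reference \cite{MR3275299}, to conclude that these two quantities agree exactly. Chaining the two facts then yields \(\intdist(\Lambda,\Lambda') \le 2\,\netwdist(M,M') = 2\, d_{GH}(M,M')\), which is the claim.

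The only subtlety, and what I expect to be the main point to check, is purely a matter of matching definitions: one must confirm that the set \(\mathcal R\) of correspondences over which the network distance infimizes coincides with the set of correspondences appearing in the correspondence formula for \(d_{GH}\), and that the additional metric-specific properties (symmetry and the triangle inequality) do not alter the value of the infimum. Since a metric is symmetric and the distortion functional is symmetric in its two arguments, the passage from networks to metric spaces introduces no discrepancy, so no separate argument is required and the reduction to Proposition \ref{stabilityresult} is faithful.
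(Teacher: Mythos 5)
Your proposal is correct and follows essentially the same route as the paper: view the metric spaces as non-negatively weighted networks, invoke the classical correspondence characterization to get \(d_{GH}(M,M') = \netwdist(M,M')\), and conclude by Proposition \ref{stabilityresult}. The only nominal difference is the reference for that equality: the paper cites \cite[Theorem 7.3.25]{MR1835418} rather than \cite{MR3275299}, but the mathematical content is identical.
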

\begin{proof}
  By \cite[Theorem 7.3.25]{MR1835418} the Gromov--Hausdorff distance
  of the metric spaces \((M,d)\) and \((M',d')\) agrees with their
  network distance when we consider them as non-negatively weighted
  networks. That is, 
  \(d_{GH}(M,M') = \netwdist(M,M')\).
  The result now follows from Proposition \ref{stabilityresult}.
\end{proof}

\section{Truncated Dowker Dissimilarities}
\label{sec:truncated}

\begin{definition}\label{definsertionfct}
  Let \(\Lambda \colon L
  \times W \to \rstar\) be a Dowker dissimilarity, let \(T \subseteq L
  \times W\) be a triangle relation for \(\Lambda\)
  and let \(\beta
  \colon \rstar \to \rstar\) be an order preserving function.
  A {\em \(T\)-insertion function for \(\Lambda\) of resolution at most
    \(\beta\) 
  }
  is a function \(\lambda 
  \colon W \to \rstar\) with the property that for every \(t \in
  \rstar\) and for every 
  \((l,w) \in T\)  there exists \(w_0 \in W\) so that
  \begin{displaymath}
    \Lambda(l,w_0) \le \beta(t) < \lambda(w_0).
  \end{displaymath}
\end{definition}

\begin{example}
  Recall the Dowker dissimilarity \(\Lambda \colon L \times W \to \rstar\)
  from Example~\ref{kmeans_example} for two subsets \(L\) and \(W\) of
  a metric space \((M, d)\) and let \(\beta\) be any order preserving
  function with \(\beta (t) > \rho_\Lambda\). Then for every \(T
  \subseteq L \times W\) the function \(\lambda \equiv \infty\)
  is a \(T\)-insertion function for \(\Lambda\) of resolution at most \(\beta\).
\end{example}
In the following definition we use the generalized inverse from Definition
\ref{generalizedinverse}. 
\begin{definition}\label{definetruncation}
  Let \(\Lambda \colon L \times W \to \rstar\) be a Dowker
  dissimilarity with a triangle relation \(T\) and a \(T\)-insertion
  function \(\lambda\) of resolution 
  at most \(\beta\) for an order preserving \(\beta \colon \rstar \to
  \rstar\) with
  \begin{displaymath}
    \lim_{t \to \infty} \beta(t) = \infty.
  \end{displaymath}
  Given an order
  preserving function \(\alpha \colon \rstar \to \rstar\), satisfying
  that 
  \(\alpha(t) \ge t + \beta(t)\) for all \(t\), the {\em
    \((\lambda,\alpha,\beta)\)-truncation} of \(\Lambda\) is the Dowker
  dissimilarity \(\Lambda^{(\lambda,\alpha,\beta)} \colon L \times W \to
  \rstar\) defined by
  \begin{displaymath}
    \Lambda^{(\lambda,\alpha,\beta)}(l, w) =
    \begin{cases}
      \Lambda(l,w) & \text{if \(\Lambda(l,w)
        \le \alpha \beta^{\leftarrow} \lambda(w)\)} \\
      \infty & \text{otherwise}.
    \end{cases}
  \end{displaymath}
\end{definition}

\begin{lemma}\label{morphismintotruncation}
  Let \(\Lambda \colon L \times W \to \rstar\) be a Dowker dissimilarity
  with a triangle relation \(T\) and a \(T\)-insertion function \(\lambda\) of
  resolution at most \(\beta \colon \rstar \to \rstar\).
  If \(\alpha \colon \rstar \to \rstar\) is an order preserving
  function satisfying 
  \begin{displaymath}
    \alpha(t) \ge t + \beta(t) + \sup \Lambda(T)
  \end{displaymath}
  for all \(t \in \rstar\), then
  \(\Delta_L\) is a morphism \(\Delta_L \colon \Lambda \to
  \alpha^* \Lambda^{(\lambda,\alpha,\beta)}\) of Dowker dissimilarities.
\end{lemma}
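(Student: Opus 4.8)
The plan is to unwind the definition of a morphism of Dowker dissimilarities and reduce the claim to a single level-wise containment of nerves. By Definition \ref{filtereddowkermorphism}, it suffices to exhibit a single relation $C \subseteq L \times L$ serving as $\Delta_L$ at every level; I take $C = \Delta_L$. Using the identification $(\alpha^* \Lambda^{(\lambda,\alpha,\beta)})_t = \Lambda^{(\lambda,\alpha,\beta)}_{\alpha(t)}$, the transformation axioms are automatic, since all structure maps of both filtered relations are diagonals and $\Delta_L$ composed with a diagonal is again $\Delta_L$. So the content is to check that, for each $t \in \rstar$, the diagonal is a morphism $\Lambda_t \to \Lambda^{(\lambda,\alpha,\beta)}_{\alpha(t)}$ in $\relcat$. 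Since $(N\Delta_L)(\sigma) = \sigma$ for every finite $\sigma \subseteq L$, this amounts to the inclusion $N\Lambda_t \subseteq N\Lambda^{(\lambda,\alpha,\beta)}_{\alpha(t)}$, i.e. that every $\sigma \in N\Lambda_t$ is witnessed in the truncated dissimilarity at level $\alpha(t)$.

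First I would fix $\sigma \in N\Lambda_t$ together with a witness $w \in W$ satisfying $\Lambda(l,w) < t$ for all $l \in \sigma$. Using property $(1)$ of the triangle relation $T$ (Definition \ref{trianglerelation}), I choose $l^* \in L$ with $(l^*, w) \in T$, and then feed the pair $(l^*, w)$ and the level $t$ into the $T$-insertion function (Definition \ref{definsertionfct}) to obtain a single $w_0 \in W$ with $\Lambda(l^*, w_0) \le \beta(t) < \lambda(w_0)$. The decisive point is that $w_0$ is produced from $w$ alone, via $l^*$, so it serves as a common witness for every vertex $l \in \sigma$ simultaneously.

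Next I would estimate $\Lambda(l, w_0)$ for an arbitrary $l \in \sigma$ by applying the triangle inequality of $T$ (property $(2)$ of Definition \ref{trianglerelation}) to the $T$-pair $(l^*, w)$ and the pair $(l, w_0) \in L \times W$:
\[ \Lambda(l, w_0) \le \Lambda(l, w) + \Lambda(l^*, w_0) + \Lambda(l^*, w) < t + \beta(t) + \sup \Lambda(T) \le \alpha(t), \]
where the strict middle inequality uses $\Lambda(l,w) < t$, $\Lambda(l^*, w_0) \le \beta(t)$ and $\Lambda(l^*, w) \le \sup \Lambda(T)$, and the final inequality is exactly the standing hypothesis on $\alpha$. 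It then remains to verify that $w_0$ survives the truncation, i.e. that $\Lambda^{(\lambda,\alpha,\beta)}(l, w_0) = \Lambda(l, w_0)$ rather than $\infty$.

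The genuinely delicate point — and the step I expect to be the main obstacle — is the passage from the inequality $\beta(t) < \lambda(w_0)$ to a lower bound on $\alpha \beta^{\leftarrow} \lambda(w_0)$, where $\beta^{\leftarrow}$ is the generalized inverse of Definition \ref{generalizedinverse}. Here I would argue that $\beta(t) < \lambda(w_0)$ forces every $s$ with $\beta(s) \ge \lambda(w_0)$ to satisfy $s > t$, because $\beta$ is order preserving; hence $\beta^{\leftarrow} \lambda(w_0) \ge t$ and therefore $\alpha \beta^{\leftarrow} \lambda(w_0) \ge \alpha(t)$ by monotonicity of $\alpha$. Combined with the displayed estimate this yields $\Lambda(l, w_0) < \alpha(t) \le \alpha \beta^{\leftarrow} \lambda(w_0)$, so the truncation leaves the value unchanged and $\Lambda^{(\lambda,\alpha,\beta)}(l, w_0) = \Lambda(l, w_0) < \alpha(t)$ for every $l \in \sigma$. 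This exhibits $w_0$ as a witness for $\sigma \in N\Lambda^{(\lambda,\alpha,\beta)}_{\alpha(t)}$, establishing the required inclusion of nerves and completing the argument.
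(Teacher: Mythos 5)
Your proposal is correct and takes essentially the same route as the paper's own proof: the same reduction to the levelwise inclusion \(N\Lambda_t \subseteq N\Lambda^{(\lambda,\alpha,\beta)}_{\alpha(t)}\), the same witness \(w_0\) produced via the triangle relation and the \(T\)-insertion function, the same three-term triangle estimate, and the same generalized-inverse argument showing \(w_0\) survives the truncation. The only difference is cosmetic: you verify \(\beta(t) < \lambda(w_0) \Rightarrow t \le \beta^{\leftarrow}\lambda(w_0)\) directly from Definition \ref{generalizedinverse}, where the paper cites part \((5)\) of \cite[Proposition 1]{MR3072795}.
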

\begin{proof}
  Let \(t \in \rstar\) and \(\sigma \in N\Lambda_t\). We need to show
  \(\sigma \in N\Lambda^{(\lambda,\alpha,\beta)}_{\alpha
    t}\). Pick \(w \in W\) with \(\Lambda(l,w) < t\) for all \(l
  \in \sigma\). Since \(T\) is a triangle relation we can pick \(l_0
  \in L\) so that \((l_0,w)\in T\). Since \(\lambda\) is a
  \(T\)-insertion function of
  resolution at most \(\beta\)
  we can pick \(w_0 \in W\) so that
  \begin{displaymath}
    \Lambda(l_0, w_0) \le \beta (t) < \lambda w_0.
  \end{displaymath}
  The triangle inequality for \(T\) now gives
  \begin{displaymath}
    \Lambda(l,w_0) \le \Lambda(l_0, w_0) + \Lambda(l_0, w) + \Lambda(l, w).
  \end{displaymath}
  We have picked \(l_0\), \(w\) and \(w_0\) so that \(\Lambda(l_0, w) \le
  \sup \Lambda (T)\) and also \(\Lambda(l_0, w_0) \le \beta t\).
  If \(l \in \sigma\), then \(\Lambda(l, w) < t\), and thus
  \begin{displaymath}
    \Lambda(l,w_0) < \beta t + \sup \Lambda (T) + t  = \alpha t.
  \end{displaymath}
  From part \((5)\) in \cite[Proposition 1]{MR3072795} the inequality
  \(\beta(t) < \lambda(w_0)\) gives \(t \le \beta^{\leftarrow} \lambda
  (w_0)\). Since \(\alpha\) is
  order preserving we get
  \(\Lambda(l,w_0) < \alpha \beta^{\leftarrow} \lambda w_0\).
  We conclude that \(\sigma \in N\Lambda^{(\lambda, \alpha,
    \beta)}_{\alpha t}\). 
\end{proof}
\begin{proposition}\label{interleavedtruncation}
  Let \(\Lambda \colon L \times W \to \rstar\) be a Dowker dissimilarity
  with an insertion function \(\lambda \colon W \to \rstar\) of
  resolution at most \(\beta \colon \rstar \to \rstar\) and a
  triangle relation \(T \subseteq L \times W\).
  If an order preserving function \(\alpha \colon \rstar \to \rstar\)
  satisfies
  \begin{displaymath}
    \alpha(t) \ge t + \beta(t) + \sup \Lambda(T)
  \end{displaymath}
  for all \(t \in \rstar\), then the Dowker dissimilarities
  \(\Lambda\) and \(\Lambda^{(\lambda,\alpha,\beta)}\) are \((\alpha,
  \id)\)-interleaved. 
\end{proposition}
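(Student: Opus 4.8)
The plan is to exhibit an explicit \((\alpha,\id)\)-interleaving in which both structure morphisms are the diagonal \(\Delta_L\), and then to observe that the two round-trip composites coincide \emph{on the nose} with the corresponding unit morphisms, so that the required \(2\)-cells can be taken to be identities. First I would unwind the definition of \((\alpha,\id)\)-interleaving. Setting \(\alpha'=\id\), such an interleaving consists of morphisms \(F \colon \Lambda \to \alpha^*\Lambda^{(\lambda,\alpha,\beta)}\) and \(F' \colon \Lambda^{(\lambda,\alpha,\beta)} \to \Lambda\) of Dowker dissimilarities, together with \(2\)-cells \(\alpha_* \to (\alpha^*F')\circ F\) and \(\alpha_* \to (\id^*F)\circ F'\). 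For \(F\) I would simply invoke Lemma \ref{morphismintotruncation}: under the hypothesis \(\alpha(t)\ge t + \beta(t) + \sup\Lambda(T)\), the diagonal \(\Delta_L\) is already a morphism \(\Lambda \to \alpha^*\Lambda^{(\lambda,\alpha,\beta)}\). This is where the entire analytic content of the statement resides, and it has already been established.

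For the reverse morphism \(F'\) I would again take \(\Delta_L\). This is well defined because truncation only raises values: by Definition \ref{definetruncation} one has \(\Lambda^{(\lambda,\alpha,\beta)}(l,w) \ge \Lambda(l,w)\) for all \((l,w)\), so \(\Lambda^{(\lambda,\alpha,\beta)}_t \subseteq \Lambda_t\) as relations for every \(t\). Consequently \(N\Lambda^{(\lambda,\alpha,\beta)}_t \subseteq N\Lambda_t\), and \((N\Delta_L)(\sigma) = \sigma\) lands in \(N\Lambda_t\) whenever \(\sigma \in N\Lambda^{(\lambda,\alpha,\beta)}_t\). Thus \(F'\) is nothing but the level-wise inclusion, realized by the diagonal relation.

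It then remains to verify the two \(2\)-cells. Since a morphism of Dowker dissimilarities is carried by a single relation independent of the filtration parameter, pulling back along \(\alpha\) leaves that carrying relation unchanged; hence both \(\alpha^*F'\) and \(\id^*F\) are again carried by \(\Delta_L\). Composing relations gives \((\alpha^*F')\circ F = \Delta_L\circ\Delta_L = \Delta_L\) and likewise \((\id^*F)\circ F' = \Delta_L\). On the other side, by Definition \ref{filtereddowkermorphism} the structure map \(\Lambda_{s\le t}\) is \(\Delta_L\), so each unit \(\alpha_*\) is carried by \(\Delta_L\) as well. Both round-trip composites therefore equal the corresponding units as morphisms in \(\dow\), and the demanded \(2\)-cells may be taken to be the identity inclusions \(\Delta_L \le \Delta_L\).

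I expect no genuine obstacle here: the proposition is essentially an assembly of Lemma \ref{morphismintotruncation} with the trivial inclusion in the opposite direction. The only point requiring care is bookkeeping with the \(2\)-categorical conventions---confirming that the two composites land in the correct hom-categories, that pullback preserves the carrying relation, and that the units \(\alpha_*\) are literally the diagonal---so that the \(2\)-cells are forced to be identities rather than nontrivial inclusions.
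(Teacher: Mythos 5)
Your proposal is correct and follows essentially the same route as the paper: invoke Lemma \ref{morphismintotruncation} for the morphism \(\Delta_L \colon \Lambda \to \alpha^*\Lambda^{(\lambda,\alpha,\beta)}\), and observe that truncation only raises values, so \(\Delta_L\) also gives the reverse morphism \(\Lambda^{(\lambda,\alpha,\beta)} \to \Lambda\). Your explicit verification that both round-trip composites are carried by \(\Delta_L \circ \Delta_L = \Delta_L\) and literally coincide with the units \(\alpha_*\) (so the required \(2\)-cells are identities) is a detail the paper leaves implicit, and it is accurate.
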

\begin{proof}
  By Lemma \ref{morphismintotruncation}, the relation \(\Delta_L\) gives
  a morphism
  \[\Delta_L \colon \Lambda \to
    \alpha^* \Lambda^{(\lambda,\alpha,\beta)}\]
  of Dowker
  dissimilarities. Since \(\Lambda(l,w) \le
  \Lambda^{(\lambda,\alpha,\beta)}(l,w)\) for all \((l,w) \in L \times
  W\), the relation \(\Delta_L\) also gives a
  a morphism \(\Delta_L \colon \Lambda^{(\lambda,\alpha,\beta)} \to
  \Lambda\) of Dowker 
  dissimilarities. 
\end{proof}

\section{Sparse Dowker Nerves}
\label{sec:dnerves}

\begin{definition}
  Let \(\Lambda \colon L \times W \to \rstar\) be a Dowker
  dissimilarity and let \(\varphi \colon L \to L\) and \(\lambda \colon L
  \to \rstar\) be functions.
  Given \(\sigma \in N\Lambda_\infty\), the {\em radius}
  of \(\sigma\) is
  \begin{displaymath}
    r(\sigma) = \inf \{t \, \mid \, \sigma \in N\Lambda_t \} .
  \end{displaymath}
  The {\em sparse \((\varphi, \lambda)\)-nerve} of \(\Lambda\) is the
  filtered simplicial complex \(N(\Lambda,\varphi, \lambda)\) defined by
  \begin{displaymath}
    N(\Lambda, \varphi, \lambda)(t) = 
    \{ \sigma \in N\Lambda_t \, \mid \,
    r(\sigma) \le \lambda(\varphi(l)) \text{ for all \(l \in \sigma\)}\}.
  \end{displaymath}
\end{definition}

\begin{proposition}\label{formalpropertiesgivedeformationretract}
  Let \(\Lambda \colon L \times W \to \rstar\) be a Dowker
  dissimilarity and let \(\varphi \colon L \to L\) and \(\lambda \colon L
  \to \rstar\) be functions.
  Suppose there exists \(l_0 \in L\) and an integer \(N \ge 0\) so that
  for all \(l \in L\) and all \(t \in \rstar\): 
  \begin{enumerate}
  \item \(\varphi^N(l) = l_0\).
  \item \(B_\Lambda(l, \lambda(l)) \subseteq B_\Lambda(\varphi(l),
    \lambda(\varphi(l)))\). 
  \item \(B_\Lambda(l, t) = B_\Lambda(l, \lambda(l))\) if \(\lambda(l) \le t\).
  \item \(\lambda(\varphi(l)) \ge \lambda(l)\).
  \end{enumerate}
  Then for every \(t \in \rstar\) the inclusion of \(N(\Lambda,
  \varphi, \lambda)(t)\) in  
  \((N\Lambda)(t)\)
  is a homotopy equivalence.
\end{proposition}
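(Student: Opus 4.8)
The plan is to fix $t\in\rstar$, write $K=(N\Lambda)(t)$ and $S=N(\Lambda,\varphi,\lambda)(t)\subseteq K$, and exhibit $S$ as a deformation retract of $|K|$, so that the inclusion is a homotopy equivalence of realizations. Since $\sigma\in N\Lambda_s$ exactly when $r(\sigma)<s$, passing from $K$ to $S$ amounts to deleting the simplices
\begin{displaymath}
  D=\{\sigma\in K \mid r(\sigma)>\lambda(\varphi(l))\text{ for some }l\in\sigma\}=\bigcup_{v\in L}D_v,\qquad D_v=\{\sigma\in K\mid v\in\sigma,\ r(\sigma)>\lambda(\varphi(v))\}.
\end{displaymath}
I would organize the retraction as a sequence of elementary collapses, one for each vertex, each realized by pairing simplices along the parent vertex $\varphi(v)$.

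The geometric heart of the argument is that any vertex $v$ responsible for a removal is \emph{dominated} by $\varphi(v)$ at level $t$. If $D_v\neq\emptyset$ then $\lambda(\varphi(v))<r(\sigma)<t$ for some $\sigma$, so condition (4) gives $\lambda(v)\le\lambda(\varphi(v))<t$, and conditions (2) and (3) combine to $B_\Lambda(v,t)=B_\Lambda(v,\lambda(v))\subseteq B_\Lambda(\varphi(v),\lambda(\varphi(v)))=B_\Lambda(\varphi(v),t)$. Hence every witness of a simplex $\sigma\ni v$ is a witness of $\sigma\cup\{\varphi(v)\}$, so $\sigma\cup\{\varphi(v)\}\in K$ and, crucially, inserting or deleting $\varphi(v)$ in a simplex containing $v$ leaves both membership in $K$ and the radius $r$ unchanged. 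I would also record the base case here: iterating condition (4) along $\varphi$ and using $\varphi^N(l)=l_0$ yields $\lambda(l)\le\lambda(l_0)$ for all $l$, while condition (3) gives $r(\sigma)\le\max_{l\in\sigma}\lambda(l)$ for every $\sigma\in N\Lambda_\infty$; consequently $r(\sigma)\le\lambda(l_0)$ whenever $l_0\in\sigma$, so $l_0$ (which condition (1) forces to satisfy $\varphi(l_0)=l_0$) never lies in $D$.

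I would then delete the sets $D_v$ one vertex at a time, processing the vertices $v\neq l_0$ in order of increasing $\lambda(\varphi(v))$ and breaking ties by \emph{decreasing} $\varphi$-depth, where the depth of $l$ is the least $k$ with $\varphi^k(l)=l_0$ and strictly decreases under $\varphi$ for $l\neq l_0$ by condition (1). At the step for $v$, with current complex $K'$, I pair each still-present $\sigma\in D_v$ with $\sigma\mathbin{\triangle}\{\varphi(v)\}$. By the domination statement both members lie in $D_v$ and share the same radius, and the chosen order guarantees that $\sigma\cup\{\varphi(v)\}$ was not deleted earlier: if another vertex of $\sigma$ were responsible it would already have removed $\sigma$ itself, while $\varphi(v)$ cannot be processed before $v$ because $\lambda(\varphi(\varphi(v)))\ge\lambda(\varphi(v))$ and $\varphi(v)$ has strictly smaller depth. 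A matching defined by adjoining and deleting a single fixed vertex is automatically acyclic, so this step is an elementary collapse and hence a deformation retraction of $|K'|$ onto $|K'\setminus D_v|$; composing over all $v\neq l_0$ removes exactly $D$ and realizes $S$ as a deformation retract of $K$.

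The main obstacle is precisely this bookkeeping: one must check that at every stage the deleted simplices genuinely form free pairs in the \emph{current} complex $K'$ rather than merely in $K$, which is what the processing order is designed to ensure, and that the tie-breaking keeps the pairing an involution (so that $\varphi(v)$ never usurps $v$ as the responsible vertex, using $\lambda(\varphi^2(v))\ge\lambda(\varphi(v))$ together with the depth drop). A secondary difficulty arises when $L$ is infinite, where the sequence of collapses need not terminate; there I would instead write $|K|$ as the colimit of its finite subcomplexes and use the uniform depth bound $N$ from condition (1) to perform the collapses compatibly on the finite pieces, concluding that the inclusion $|S|\hookrightarrow|K|$ induces isomorphisms on all homotopy groups.
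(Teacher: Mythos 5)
Your proof is correct in substance, but it takes a genuinely different route from the paper's. The paper first reduces to \(t \le \lambda(l_0)\) (using (1), (3), (4) to see both complexes stabilize beyond \(\lambda(l_0)\)), introduces the intermediate complex \(N_t(\Lambda,\varphi,\lambda) = \{\sigma \in N\Lambda_t \mid t \le \lambda(\varphi(l)) \text{ for all } l \in \sigma\}\), and defines a single simplicial map \(f(l) = \varphi^m(l)\) with \(m\) minimal such that \(\lambda(\varphi^{m+1}(l)) \ge t\); the same ball-domination computation that is the geometric heart of your argument shows \(B_\Lambda(l,t) \subseteq B_\Lambda(f(l),t)\), so \(f\) is simplicial on both \(N\Lambda_t\) and the sparse complex and contiguous to the identity, exhibiting \(N_t(\Lambda,\varphi,\lambda)\) as a deformation retract of both in one stroke --- no vertex ordering, no matching, and no separate treatment of infinite \(L\). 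Your discrete-Morse collapsing scheme buys something stronger in the finite case, namely an actual collapse \(N\Lambda_t \searrow N(\Lambda,\varphi,\lambda)(t)\), i.e.\ a simple-homotopy equivalence rather than a bare homotopy equivalence, and it avoids the intermediate complex; the price is exactly the free-pair bookkeeping you carry out, which your processing order (increasing \(\lambda(\varphi(v))\), ties by decreasing \(\varphi\)-depth) does make work.

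Two points should be tightened. First, the blanket claim that inserting or deleting \(\varphi(v)\) ``leaves the radius \(r\) unchanged'' is false as stated: a witness \(w\) of \(\sigma \ni v\) at a level \(s < \lambda(\varphi(v))\) only satisfies \(\Lambda(\varphi(v),w) < \lambda(\varphi(v))\), not \(\Lambda(\varphi(v),w) < s\), so in general one gets only \(r(\sigma \cup \{\varphi(v)\}) \le \max\bigl(r(\sigma), \lambda(\varphi(v))\bigr)\). Since your matching lives entirely inside \(D_v\), where all radii exceed \(\lambda(\varphi(v))\), the invariance you actually invoke does hold there (and it is also what guarantees \(\sigma \setminus \{\varphi(v)\} \in D_v\)), but it should be stated with that restriction. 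Second, in the infinite case the colimit must be taken over finite \emph{\(\varphi\)-closed} vertex sets --- finite by condition (1), since every \(\varphi\)-orbit reaches \(l_0\) within \(N\) steps --- as otherwise the partner \(\sigma \mathbin{\triangle} \{\varphi(v)\}\) can leave the finite subcomplex and the matching does not restrict; with that choice each piece collapses onto its sparse part, and Whitehead's theorem upgrades the resulting weak equivalence to the asserted homotopy equivalence of the inclusion.
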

\begin{proof}
  Assumptions \((1), (3)\) and \((4)\) together imply that \(N\Lambda_t =
  N\Lambda_{\lambda(l_0)}\) and \(N(\Lambda, \varphi, \lambda)(t) =
  N(\Lambda, \varphi, \lambda)(\lambda(l_0))\) for \(t \ge \lambda(l_0)\). 
  Thus
  it suffices to prove the claim for \(t \le \lambda(l_0)\). 
  In this situation we will show that the inclusions of
  \begin{displaymath}
    N_t(\Lambda, \varphi, \lambda) = 
    \{ \sigma \in N\Lambda_t \, \mid \,
    t \le \lambda (\varphi( l)) \text{ for all \(l \in \sigma\)}\}
  \end{displaymath}
  in both \(N(\Lambda, \varphi, \lambda)(t)\) and in
  \(N\Lambda_t\)
  are deformation
  retracts.
  For this it suffices to find a map \(f \colon N\Lambda_t \to
  N\Lambda_t\) with the 
  following three properties: firstly both \(f\) and its restriction
  \(f \colon N(\Lambda, \varphi, \lambda)(t) \to N(\Lambda, \varphi, \lambda)(t)\) are
  contiguous to the 
  identity. Secondly we have
  \(f(\sigma) = \sigma\) for 
  every \(\sigma \in N_t(\Lambda, \varphi, \lambda)\), and thirdly \(f(\sigma)
  \in N_t(\Lambda, \varphi, \lambda)\) for every \(\sigma \in N\Lambda_t\).

  For \(t \le \lambda(l_0)\) 
  we use assumption \((1)\) to define a
  function \(f \colon 
  L \to L\) by 
  \begin{displaymath}
    f(l) =
      \varphi^m(l) \text{ for \(m \ge 0\) minimal with \(\lambda(
        \varphi^{m+1}(l))
        \ge t\)}.
  \end{displaymath}
  Given \(\sigma \in N\Lambda_t\) we 
  let \(f(\sigma) =
  \{f(l) \, \mid \, l \in \sigma\}\). By construction, if
  \(\sigma \in N_t(\Lambda, \varphi, \lambda)\), then \(f(\sigma) = \sigma\).
  On the other hand, by construction, \(\lambda(\varphi(
  f(l))) 
  \ge t\) for all \(l \in \sigma\) so
  \(f(\sigma) \in N_t(\Lambda, \varphi, \lambda)\).

  Note that if \(\lambda(\varphi(l)) < t\), then assumption \((2)\) gives
  \begin{displaymath}
    B_\Lambda(l, \lambda(l)) \subseteq B_\Lambda(\varphi(l),
    \lambda(\varphi(l))) \subseteq 
    B_\Lambda(\varphi(l),t), 
  \end{displaymath}
  and together with assumptions \((3)\) and \((4)\) we get
  \begin{displaymath}
    B_\Lambda(l, t) \subseteq B_\Lambda(\varphi(l), t).
  \end{displaymath}
  On the other hand, if \( \lambda(\varphi(l)) \ge t\), then \(f(l) = l\).
  It follows by induction that \(B_\Lambda(l, t) \subseteq
  B_\Lambda(f(l), t)\) for every \(l \in L\). 
  This implies that 
  the map \(f \colon L \to L\) induces simplicial maps
  \(f \colon N \Lambda_t \to N\Lambda_t\) and \(f \colon N(\Lambda,
  \varphi, \lambda)(t) \to N(\Lambda, \varphi, \lambda)(t)\) which are
  contiguous to the respective 
  identity maps. 
\end{proof}

\section{Dowker Dissimilarities On Finite Ordinals}
\label{sec:filtereddowkerdissimilarities}
In this section give a sparse approximation to the Dowker Nerve for 
Dowker dissimilarities of the form
\begin{displaymath}
  \Lambda \colon L \times [n] \to \rstar,
\end{displaymath}
where \([n] = \{0 < 1 < \dots < n\}\).
\begin{definition}\label{defineinsertionandparent}
  Let \(n \ge 0\) be a natural number, let
  \begin{displaymath}
    \Lambda \colon L \times [n] \to \rstar
  \end{displaymath}
  be a Dowker dissimilarity and let \(T \subseteq L \times [n]\) be a
  triangle relation for \(\Lambda\).
  \begin{enumerate}
  \item The {\em domain} of \(T\) is the set
    \begin{displaymath}
      D(T) = \{l \in L \, \mid \, \text{there exists \(k \in [n]\) with \((l,k) \in T\)}\}. 
    \end{displaymath}
  \item The {\em insertion radius of \(k \in [n]\) with respect to
      \(\Lambda\) and \(T\)} is
    \begin{displaymath}
      \lambda_{\Lambda, T}(k) =
      \begin{cases}
        \infty & \text{ if \(k = 0\)} \\
        \sup_{l \in D(T)} \inf_{i \in [k-1]}  \Lambda(l,i) & \text{ if \(k
          > 0\)}.
      \end{cases}
    \end{displaymath}
  \end{enumerate}
\end{definition}

Recall the definition of the cover radius \(\rho_\Lambda\) of a Dowker
dissimilarity in Definition \ref{definecovrad} and the definition of
\(T\)-insertion functions for \(\Lambda\) in \ref{definsertionfct}.
\begin{lemma}\label{lambdaisaninsertionfunction}
  Let \(\Lambda \colon L \times [n] \to \rstar\) be a Dowker
  dissimilarity and let \(\beta \colon \rstar \to \rstar\) be an order
  preserving 
  function with \(\beta(t) \ge \rho_\Lambda\) for all \(t \in
  \rstar\). The insertion radius 
  \(\lambda_{\Lambda , T}
  \colon [n] \to \rstar\) with respect to \(\Lambda\) and \(T\) is a
  \(T\)-insertion function for \(\Lambda\) of  
  resolution at most \(\beta\). 
\end{lemma}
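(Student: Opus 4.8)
The plan is to produce, for an arbitrary $t \in \rstar$ and an arbitrary pair $(l,w) \in T$, a witness $w_0 \in [n]$ realizing the two inequalities $\Lambda(l,w_0) \le \beta(t) < \lambda_{\Lambda,T}(w_0)$ of Definition~\ref{definsertionfct}. Note at the outset that $l \in D(T)$ and that $w$ is itself an element of $[n]$; both facts will be used. The whole argument rests on choosing $w_0$ to be the \emph{earliest} witness that is close to $l$, so that its lateness-penalty $\lambda_{\Lambda,T}(w_0)$ stays large.

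First I would record the monotonicity of the insertion radius. Since $[k-1] \subseteq [k]$, the quantity $\inf_{i \in [k-1]} \Lambda(l',i)$ is non-increasing in $k$ for each fixed $l'$, hence so is its supremum $\lambda_{\Lambda,T}(k)$ over $l' \in D(T)$, and $\lambda_{\Lambda,T}(0) = \infty$. This is what allows a small index to be traded for a large insertion radius.

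The second ingredient, which I expect to be the crux, is the claim that $l$ lies within $\beta(t)$ of the witness set, i.e. $\inf_{i \in [n]} \Lambda(l,i) \le \beta(t)$. Because $w \in [n]$, it suffices to bound $\Lambda(l,w)$, and this is exactly where the hypothesis $\beta(t) \ge \rho_\Lambda$ must enter. In the model case where $T$ is the nearest-point relation, $(l,w) \in T$ means $\Lambda(l,w) = \inf_{l'} \Lambda(l',w) \le \rho_\Lambda \le \beta(t)$, so that $\inf_{i} \Lambda(l,i) \le \Lambda(l,w) \le \beta(t)$ immediately. For a general triangle relation the value $\Lambda(l,w)$ need not be minimal over $L$, and reconciling the directed quantity $\inf_i \Lambda(l,i)$ for a point $l \in D(T)$ with the cover radius $\rho_\Lambda = \sup_i \inf_{l'} \Lambda(l',i)$ — invoking property $(2)$ of the triangle relation to transfer between the two — is the one genuinely non-formal step; everything else is bookkeeping.

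Granting the claim, I would finish as follows. Let $w_0$ be the least index in $[n]$ with $\Lambda(l,w_0) \le \beta(t)$; this set is non-empty by the claim, so $w_0$ is well defined and the first inequality holds by construction. By minimality, $\Lambda(l,i) > \beta(t)$ for every $i < w_0$, so the infimum of these finitely many values exceeds $\beta(t)$, and since $l$ is admissible in the supremum defining the insertion radius,
\[
  \lambda_{\Lambda,T}(w_0) = \sup_{l' \in D(T)} \inf_{i \in [w_0-1]} \Lambda(l',i) \ge \inf_{i \in [w_0-1]} \Lambda(l,i) > \beta(t),
\]
which is the second inequality. The degenerate case $w_0 = 0$ is immediate since $\lambda_{\Lambda,T}(0) = \infty$, and the case $\beta(t) = \infty$ can be handled separately. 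Thus the only substantive work is the covering claim of the previous paragraph; the monotonicity and the choice of the minimal index are routine.
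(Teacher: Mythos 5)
Your reconstruction matches the paper's own proof in every step it actually contains: the paper's entire argument reads ``given \(t \in \rstar\) and \(l \in L\), let \(i \in [n]\) be minimal under the condition that \(\Lambda(l,i) \le \beta(t)\); then, by definition of \(\lambda_{\Lambda,T}\), we have \(\lambda_{\Lambda,T}(i) > \beta(t)\).'' Your choice of the least index \(w_0\), the estimate \(\lambda_{\Lambda,T}(w_0) \ge \inf_{i \in [w_0-1]} \Lambda(l,i) > \beta(t)\) using \(l \in D(T)\) and finiteness of \([w_0-1]\), and the case \(w_0 = 0\) with \(\lambda_{\Lambda,T}(0) = \infty\) are all correct and are exactly what the paper records. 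But your proof is incomplete at precisely the step you flag and then skip: you never establish the covering claim that some \(i \in [n]\) satisfies \(\Lambda(l,i) \le \beta(t)\), you only ``grant'' it --- and granting it is granting the lemma, since, as you say yourself, everything else is bookkeeping.

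Moreover, the repair you gesture at cannot work, and in fact the claim is false under the stated hypotheses. Part \((2)\) of Definition \ref{trianglerelation} produces, for \((l,w) \in T\), only inequalities in which the terms \(\Lambda(l,w')\) and \(\Lambda(l,w)\) sit on the \emph{right}-hand side, so it can never bound a value \(\Lambda(l,w_0)\) from above; and the hypothesis \(\beta(t) \ge \rho_\Lambda\) controls the column quantities \(\inf_{l' \in L} \Lambda(l',k)\), not the row quantity \(\inf_{k \in [n]} \Lambda(l,k)\) for your particular \(l \in D(T)\). Concretely, take \(n = 0\), \(L = \{a,b\}\), \(\Lambda(a,0) = 0\), \(\Lambda(b,0) = 100\), and \(T = \{(b,0)\}\): conditions \((1)\) and \((2)\) of Definition \ref{trianglerelation} hold, \(\rho_\Lambda = 0\), and with \(\beta(t) = t\) there is no admissible \(w_0\) for \(t < 100\), since the only candidate is \(w_0 = 0\) and \(\Lambda(b,0) = 100 > \beta(t)\). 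So the lemma needs an extra hypothesis, implicitly satisfied in the paper's applications, guaranteeing that every \(l \in D(T)\) is \(\rho_\Lambda\)-close to the column set: for the nearest-point relation of Corollary \ref{sparseDowkernerve} one has \(\Lambda(l,w) = \inf_{l' \in L} \Lambda(l',w) \le \rho_\Lambda \le \beta(t)\) for \((l,w) \in T\), so \(w\) itself witnesses non-emptiness and your minimal-index argument then goes through verbatim (this is exactly your ``model case''); similarly in Proposition \ref{sparsecech} one has \(D(T) = P\) and \(\inf_{k} \Lambda(l,k) = 0\). For the record, the paper's one-line proof silently assumes the same non-emptiness, so you have faithfully reconstructed the published argument and correctly located its defect --- but as a proof, your proposal is open at that point, and the route through the triangle inequality that you propose is a dead end. (A minor wrinkle you also wave at: if \(\beta(t) = \infty\) the strict inequality \(\beta(t) < \lambda_{\Lambda,T}(w_0)\) is unsatisfiable, so the definition tacitly forces \(\beta\) to be finite-valued; this affects the paper equally.)
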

\begin{proof}
  Given \(t \in \rstar\) and \(l \in L\), let \(i \in [n]\) be minimal
  under the condition that \(\Lambda(l,i) \le \beta t\). Then, by definition of
  \(\lambda_{\Lambda, T}\), we have \(\lambda_{\Lambda, T}(i) > \beta
  t\). 
\end{proof}
\begin{definition}\label{defineparentfunction}
  Let \(\Lambda \colon L \times [n] \to \rstar\) be a Dowker
  dissimilarity, let \(T \subseteq  L \times [n]\) be a triangle
  relation for \(\Lambda\) and let \(\beta \colon \rstar \to \rstar\)
  be an order preserving function with \(\beta(t) \ge \rho_\Lambda\)
  for all \(t \in 
  \rstar\). 
  Suppose that \(\lim_{t \to \infty} \beta(t) = \infty\) and let
  \(\alpha \colon \rstar \to \rstar\) be the function 
  \begin{displaymath}
    \alpha(t) = t + \beta(t) + \sup(\Lambda(T))
  \end{displaymath}
  and let \(\lambda \colon [n] \to [n]\) be the function
  \begin{displaymath}
    \lambda(k) = \alpha \beta^{\leftarrow}\lambda_{\Lambda, T}(k)
  \end{displaymath}

  The {\em parent function} \(\varphi \colon
  [n] \to [n]\) is defined by letting \(\varphi(0) = 0\) and
  \begin{displaymath}
    \varphi(k) = \max\{ i \in [k-1] \, \mid
    \, B_{\Lambda}(k, \lambda(k))
    \subseteq B_{\Lambda}(i, 
    \lambda(i)) \text{ and } \lambda(k) \le
    \lambda(i)\}.
  \end{displaymath}
\end{definition}
The following result is about sparsification of truncated
Dowker dissimilarities. We remind that the truncated Dowker
dissimilarity \(\Lambda^{(\lambda_{\Lambda}, \alpha, \beta)}\) comes
from Definition \ref{definetruncation}.
\begin{theorem}\label{mainthm}
  Suppose, in the situation of Definition \ref{defineparentfunction},
  that \(B_{\Lambda^t}(0,\infty) = L\). It we let \(\Gamma =
  (\Lambda^{(\lambda_{\Lambda}, \alpha, \beta)})^t\), then
  the Dowker Nerve
  \(N\Lambda\) of \(\Lambda\) is \((\alpha, \id)\)-interleaved with
  the filtered simplicial complex \(N(\Gamma,
    \varphi, \lambda)\).
\end{theorem}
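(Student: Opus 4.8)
The plan is to factor the desired $(\alpha,\id)$-interleaving as a composite of three simpler interleavings and then to invoke the triangle inequality for interleavings. Writing $\Lambda^{\mathrm{tr}} = \Lambda^{(\lambda_{\Lambda},\alpha,\beta)}$ for the truncation, the three stages are: (i) $N\Lambda$ and $N\Lambda^{\mathrm{tr}}$ are $(\alpha,\id)$-interleaved; (ii) $N\Lambda^{\mathrm{tr}}$ and $N\Gamma = N(\Lambda^{\mathrm{tr}})^t$ are $(\id,\id)$-interleaved; and (iii) $N\Gamma$ and $N(\Gamma,\varphi,\lambda)$ are $(\id,\id)$-interleaved. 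Chaining these through the triangle inequality lemma yields $(\id\circ\id\circ\alpha, \id\circ\id\circ\id) = (\alpha,\id)$, which is exactly the assertion of the theorem.

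For stage (i), I would first observe that by Lemma \ref{lambdaisaninsertionfunction} the insertion radius $\lambda_{\Lambda,T}$ is a $T$-insertion function for $\Lambda$ of resolution at most $\beta$ (this uses $\beta(t)\ge\rho_\Lambda$), and that the function $\alpha(t) = t + \beta(t) + \sup\Lambda(T)$ of Definition \ref{defineparentfunction} is precisely the one required by Proposition \ref{interleavedtruncation}. That proposition then produces an $(\alpha,\id)$-interleaving of the Dowker dissimilarities $\Lambda$ and $\Lambda^{\mathrm{tr}}$, and Corollary \ref{nerveinterleavedcorr} transports it to an $(\alpha,\id)$-interleaving of the filtered simplicial complexes $N\Lambda$ and $N\Lambda^{\mathrm{tr}}$. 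For stage (ii) I would appeal to the Functorial Dowker Theorem of \cite{CM2016}: for each $t$ the Dowker complex of the relation $\Lambda^{\mathrm{tr}}_t$ and that of its transpose $\Gamma_t$ are homotopy equivalent after geometric realization, and the functoriality makes these equivalences compatible with the filtration maps $\Delta$; hence $N\Lambda^{\mathrm{tr}}$ and $N\Gamma$ are isomorphic as filtered objects of the homotopy category, that is, $(\id,\id)$-interleaved.

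The substance is in stage (iii), which is an application of Proposition \ref{formalpropertiesgivedeformationretract} to the Dowker dissimilarity $\Gamma\colon [n]\times L \to \rstar$ with the functions $\varphi,\lambda$ of Definition \ref{defineparentfunction} and basepoint $l_0 = 0$. Since the balls $B_\Gamma(k,\cdot)\subseteq L$ agree with $B_{\Lambda^t}(k,\cdot)$ for radii at most $\lambda(k)$, the four hypotheses can be read off the construction: (2) and (4) hold by the very definition of $\varphi(k)$, which selects the largest index $i<k$ with $B_\Gamma(k,\lambda(k))\subseteq B_\Gamma(i,\lambda(i))$ and $\lambda(k)\le\lambda(i)$; (1) holds because $\varphi(k)\in[k-1]$ strictly decreases while $0$ is always an admissible parent, so $\varphi^{N}(k)=0$ for $N=n$; and (3) is exactly the effect of the truncation, which sets $\Gamma(k,l)=\infty$ once $\Lambda(l,k)$ exceeds $\lambda(k)$ and thereby freezes $B_\Gamma(k,t)$ for $t\ge\lambda(k)$. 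Proposition \ref{formalpropertiesgivedeformationretract} then gives, for every $t$, that the inclusion $N(\Gamma,\varphi,\lambda)(t)\hookrightarrow N\Gamma_t$ is a homotopy equivalence; because these inclusions commute strictly with the filtration maps they assemble into a levelwise homotopy equivalence of filtered complexes, i.e. a $(\id,\id)$-interleaving.

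The hard part will be the bookkeeping in stage (iii). Two points demand care. First, one must check that the parent function $\varphi$ is well-defined, and this is exactly where the standing hypothesis $B_{\Lambda^t}(0,\infty)=L$ enters: together with $\lambda(0)=\infty$ (which follows from $\lambda_{\Lambda,T}(0)=\infty$) it guarantees that the index $0$ always lies in the set defining $\varphi(k)$, so that this set is non-empty. Second, verifying hypothesis (3) requires reconciling the strict inequalities built into the $\Lambda$-balls and the filtration with the non-strict threshold used in the truncation of Definition \ref{definetruncation}; it is precisely at this seam that the truncation of Section \ref{sec:truncated} and the sparsification of Section \ref{sec:dnerves} must be made to fit together, and the remaining stages (i) and (ii) are comparatively formal.
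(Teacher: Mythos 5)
Your proposal follows the paper's own proof essentially verbatim: the same three-stage decomposition (the $(\alpha,\id)$-interleaving of $N\Lambda$ with the truncation via Lemma \ref{lambdaisaninsertionfunction} and Proposition \ref{interleavedtruncation}, the passage to the transpose $N\Gamma$ via the Functorial Dowker Theorem of \cite{CM2016}, and the levelwise homotopy equivalence $N(\Gamma,\varphi,\lambda)(t)\hookrightarrow N\Gamma_t$ via Proposition \ref{formalpropertiesgivedeformationretract}, with the same verification of its four hypotheses and the same use of $B_{\Lambda^t}(0,\infty)=L$ to make $0$ an admissible parent), chained together exactly as the paper chains them. The only differences are cosmetic---the paper checks the stages in the opposite order---and the strict-versus-non-strict boundary subtlety you flag in hypothesis $(3)$ is present in the paper too, which simply asserts that condition $(3)$ holds \emph{by construction} of the truncation.
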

\begin{proof}
  We first check that Proposition
  \ref{formalpropertiesgivedeformationretract}
  applies to the Dowker dissimilarity \(\Gamma \colon [n] \times L \to \rstar\)
  and the functions \(\varphi\colon [n] \to [n]\)
  and \(\lambda \colon [n] \to \rstar\). By
  construction \(\varphi(0) = 0\) and \(\varphi(k) < k\) for \(k > 0\), so
  \(\varphi^n(k) = 0\) for every \(k \in [n]\). Thus condition
  \((1)\) of \ref{formalpropertiesgivedeformationretract} holds for
  \(\varphi\). By construction
  of \(\varphi\) the assumption that
  \(B_{\Lambda^t}(0,\infty) = L\) implies conditions \((2)\) and \((4)\) of
  \ref{formalpropertiesgivedeformationretract}. Condition \((3)\) of 
  \ref{formalpropertiesgivedeformationretract} holds by construction
  of \(\Lambda^{(\lambda_{\Lambda}, \alpha, \beta)}\). We conclude
  that by Proposition \ref{formalpropertiesgivedeformationretract} the
  filtered simplicial complexes \(N(\Gamma, \varphi, \lambda)\) and
  \(N\Gamma\)
  are homotopy equivalent.
  The functorial Dowker theorem \cite[Corollary 20]{CM2016} implies that
  the filtered simplicial complexes \(N\Gamma\)
  and 
  \(N(\Lambda^{(\lambda_{\Lambda},
    \alpha, \beta)})\) are homotopy equivalent. 
  By Lemma \ref{lambdaisaninsertionfunction} the function 
  \(\lambda_{\Lambda} \colon [n] \to \rstar\) is an insertion function
  for \(\Lambda\), so by
  Proposition \ref{interleavedtruncation} the filtered simplicial
  complexes
  \(N \Lambda\) and
  \(N(\Lambda^{(\lambda_{\Lambda},
    \alpha, \beta)})\)
  are \((\alpha, \id)\)-interleaved. 
\end{proof}
\begin{theorem}\label{mainresult2}
  Let
  \[\Lambda \colon L \times [n] \to \rstar\]
  be a Dowker
  dissimilarity with \(B_{\Lambda^t}(0, \infty) = L\). Let \(T\) be a
  triangle relation for \(\Lambda\) and let \(\beta \colon \rstar \to
  \rstar\) be an order preserving function with \(\lim_{t \to \infty}
  \beta(t) = \infty\). Let \(\alpha \colon \rstar \to \rstar\) be the
  function
  \begin{displaymath}
    \alpha(t) = t + \beta(t) + \sup (\Lambda(T))
  \end{displaymath}
  and let \(\lambda \colon [n] \to [n]\) be the function
  \begin{displaymath}
    \lambda(k) = \alpha \beta^{\leftarrow}\lambda_{\Lambda, T}(k).
  \end{displaymath}
  Let 
  \(\varphi \colon
  [n] \to [n]\) be the parent function defined by letting \(\varphi(0) = 0\) and
  \begin{displaymath}
    \varphi(k) = \max\{ i \in [k-1] \, \mid
    \, B_{\Lambda}(k, \lambda(k))
    \subseteq B_{\Lambda}(i, 
    \lambda(i)) \text{ and } \lambda(k) \le
    \lambda(i)\}.
  \end{displaymath}
  It we let \(\Gamma =
  (\Lambda^{(\lambda_{\Lambda}, \alpha, \beta)})^t\), then
  the Dowker Nerve
  \(N\Lambda^t\) of \(\Lambda^t\) is \((\alpha, \id)\)-interleaved with
  the filtered simplicial complex \(N(\Gamma, \varphi, \lambda)\).
\end{theorem}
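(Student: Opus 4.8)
The plan is to derive Theorem \ref{mainresult2} from Theorem \ref{mainthm} by appending a single further application of the Functorial Dowker Theorem. The data $T$, $\beta$, $\alpha$, $\lambda$, $\varphi$ and $\Gamma = (\Lambda^{(\lambda_{\Lambda}, \alpha, \beta)})^t$ are those of Definition \ref{defineparentfunction}; in particular I would take $\beta(t) \ge \rho_\Lambda$, which by Lemma \ref{lambdaisaninsertionfunction} makes $\lambda_{\Lambda, T}$ a $T$-insertion function and is the condition under which the parent function is defined. With the standing hypothesis $B_{\Lambda^t}(0,\infty) = L$ unchanged, Theorem \ref{mainthm} then applies verbatim and yields that the Dowker Nerve $N\Lambda$ is $(\alpha, \id)$-interleaved with $N(\Gamma, \varphi, \lambda)$. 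The only discrepancy with the desired conclusion is that $N\Lambda$ must be replaced by $N\Lambda^t$.

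First I would show that $N\Lambda$ and $N\Lambda^t$ are isomorphic in the homotopy category $\filhtop$ of filtered simplicial complexes. For each $t \in \rstar$ the relation $(\Lambda_t)^t$ coincides with $(\Lambda^t)_t$, and both filtrations have structure maps given by diagonals, namely $\Lambda_{s \le t} = \Delta_L$ and $(\Lambda^t)_{s \le t} = \Delta_{[n]}$. The Functorial Dowker Theorem \cite[Corollary 20]{CM2016} therefore provides homotopy equivalences $|N\Lambda_t| \simeq |N(\Lambda^t)_t|$ that are natural in $t$; this naturality is precisely what upgrades the level-wise equivalences to an isomorphism $N\Lambda \cong N\Lambda^t$ in $\filhtop$, equivalently to a $(\id, \id)$-interleaving between $N\Lambda$ and $N\Lambda^t$.

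Finally I would apply the triangle inequality for interleavings. Composing the $(\id, \id)$-interleaving between $N\Lambda^t$ and $N\Lambda$ with the $(\alpha, \id)$-interleaving between $N\Lambda$ and $N(\Gamma, \varphi, \lambda)$ produces an $(\alpha \circ \id, \id \circ \id) = (\alpha, \id)$-interleaving between $N\Lambda^t$ and $N(\Gamma, \varphi, \lambda)$, which is the assertion of the theorem. Equivalently, one can simply re-run the chain of comparisons from the proof of Theorem \ref{mainthm}, namely $N(\Gamma, \varphi, \lambda) \simeq N\Gamma \simeq N(\Lambda^{(\lambda_{\Lambda}, \alpha, \beta)})$ together with the $(\alpha, \id)$-truncation interleaving $N\Lambda \sim N(\Lambda^{(\lambda_{\Lambda}, \alpha, \beta)})$ from Proposition \ref{interleavedtruncation}, and then attach the equivalence $N\Lambda^t \simeq N\Lambda$ at the end of the chain.

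The step I expect to require the most care is the second one: the Functorial Dowker Theorem must be used in its filtered form, so that the equivalences $|N\Lambda_t| \simeq |N(\Lambda^t)_t|$ commute up to homotopy with the filtration inclusions. Pointwise Dowker duality does not suffice; what is needed is exactly the functoriality established by Chowdhury and M\'emoli, guaranteeing that these equivalences assemble into a morphism, and hence an isomorphism, of filtered objects in $\hcx$. This is the same filtered use of Dowker duality already made inside the proof of Theorem \ref{mainthm}, so once it is granted the rest is the purely formal triangle inequality for interleavings.
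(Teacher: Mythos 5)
Your proposal is correct and follows essentially the same route as the paper: the paper's proof likewise invokes Theorem \ref{mainthm} to interleave \(N\Lambda\) with \(N(\Gamma,\varphi,\lambda)\) and then appeals to the functorial Dowker Theorem to replace \(N\Lambda\) by the filtered-homotopy-equivalent \(N\Lambda^t\), with the composition via the triangle inequality for interleavings left implicit where you spell it out. Your explicit remark that \(\beta(t) \ge \rho_\Lambda\) is needed (so that Lemma \ref{lambdaisaninsertionfunction} applies and Definition \ref{defineparentfunction} makes sense) correctly fills in a hypothesis the theorem statement leaves tacit, but this is a clarification rather than a different argument.
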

\begin{proof}
  By Theorem \ref{mainthm} we have that
  \(N\Lambda\) and \(N(\Gamma, \varphi, \lambda)\)
  are \((\alpha, \id)\)-interleaved. Now use the functorial Dowker
  Theorem to get that the filtered simplicial complexes \(N\Lambda\)
  and \(N\Lambda^t\)  are homotopy equivalent.
\end{proof}
As a special case of Theorem \ref{mainresult2} we get the following result:
\begin{corollary}\label{multiplicativeinterleavingresult1}
  In the situation of Theorem \ref{mainresult2},
  let \(c > 1\), let \(\beta \colon \rstar \to \rstar\) be the function
  \begin{displaymath}
    \beta(t) = \max ((c-1)t, \rho_{\Lambda})
  \end{displaymath}
  and let \(\alpha \colon \rstar \to \rstar\) be the function
  \begin{displaymath}
    \alpha(t) = t + \beta(t) + \sup(\Lambda(T)).
  \end{displaymath}
  The Dowker Nerve
  \(N\Lambda\) of \(\Lambda\) is \((\alpha, \id)\)-interleaved with
  the filtered simplicial complex
  \begin{displaymath}
    N((\Lambda^{(\lambda_{\Lambda}, \alpha, \beta)})^t, \varphi, \lambda).
  \end{displaymath}
\end{corollary}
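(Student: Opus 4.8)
The plan is to verify that the specific function \(\beta\) introduced in the statement satisfies all the hypotheses imposed on \(\beta\) in Theorem \ref{mainresult2}, so that the corollary is exactly the instance of that theorem obtained by this choice of \(\beta\). Since \(\alpha\), \(\lambda\), and the parent function \(\varphi\) are given by the same formulas as in Theorem \ref{mainresult2}, once \(\beta\) is seen to be admissible nothing further is required.

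First I would check that \(\beta(t) = \max((c-1)t, \rho_{\Lambda})\) is order preserving: as \(c > 1\), the map \(t \mapsto (c-1)t\) is non-decreasing, and the pointwise maximum of a non-decreasing function with a constant is again non-decreasing. Next I would verify \(\lim_{t \to \infty} \beta(t) = \infty\); since \(c > 1\) we have \((c-1)t \to \infty\), and from \(\beta(t) \ge (c-1)t\) the divergence of \(\beta\) follows. This is precisely the growth hypothesis on \(\beta\) needed to invoke Theorem \ref{mainresult2}.

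The one point worth isolating is that writing \(\beta\) as a maximum also secures the lower bound \(\beta(t) \ge \rho_{\Lambda}\) for every \(t \in \rstar\). This inequality is exactly what Definition \ref{defineparentfunction} and Lemma \ref{lambdaisaninsertionfunction} require in order to guarantee that the insertion radius \(\lambda_{\Lambda, T}\) is a \(T\)-insertion function for \(\Lambda\) of resolution at most \(\beta\); without it the constructions feeding into Theorem \ref{mainthm}, and hence into Theorem \ref{mainresult2}, would not be available.

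With these verifications in hand, the hypotheses of Theorem \ref{mainresult2} are met (the condition \(B_{\Lambda^t}(0,\infty) = L\) and the triangle relation \(T\) being inherited from its situation), and applying that theorem yields the asserted \((\alpha, \id)\)-interleaving. There is no genuine obstacle: the entire content is the observation that taking \(\beta\) to be the maximum of the multiplicative term \((c-1)t\) and the constant \(\rho_{\Lambda}\) simultaneously produces the divergence required by Theorem \ref{mainresult2} and the lower bound \(\beta \ge \rho_{\Lambda}\) required to run the machinery of the preceding sections.
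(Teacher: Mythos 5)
Your proposal is correct and matches the paper's treatment: the paper offers no separate argument, introducing the corollary with ``As a special case of Theorem \ref{mainresult2} we get the following result,'' and your verification that \(\beta(t) = \max((c-1)t, \rho_\Lambda)\) is order preserving with \(\lim_{t\to\infty}\beta(t) = \infty\) is exactly what makes that specialization legitimate. Your observation that the maximum also secures \(\beta(t) \ge \rho_\Lambda\) --- a hypothesis omitted from the statement of Theorem \ref{mainresult2} but required by Definition \ref{defineparentfunction} and Lemma \ref{lambdaisaninsertionfunction} for \(\lambda_{\Lambda,T}\) to be a \(T\)-insertion function --- is a worthwhile point that the paper leaves implicit.
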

Specializing even further, we get obtain a variation of the Sparse 
\v Cech complex
of \cite{SRGeom}:
\begin{corollary}\label{sparseDowkernerve}
  Let \((M,d)\) be a metric space, let \(L \subseteq M\) be a compact
  subset, let \(P\) be a finite
  subset of \(M\) and let \([n] \xto p
  P\) be a bijection. Let \(\Lambda \colon M \times [n] \to \rstar\)
  be the function 
  \begin{displaymath}
    \Lambda(x, k) = d(x, p_k),
  \end{displaymath}
  where we write \(p_k = p(k)\). Let \(T \subseteq M \times [n]\)
  be the triangle relation for \(\Lambda\) consisting of the pairs
  \((l,k)\) such that \(d(l,p_k) \le d(l',p_k)\) 
  for every \(l' \in L\).
  Let \(c > 1\), let \(\beta \colon \rstar \to \rstar\) be the function
  \begin{displaymath}
    \beta(t) = \max ((c-1)t, \rho_{\Lambda})
  \end{displaymath}
  and let \(\alpha \colon \rstar \to \rstar\) be the function
  \begin{displaymath}
    \alpha(t) = t + \beta(t) + \sup(\Lambda(T)).
  \end{displaymath}
  For \(\varphi\) and \(\lambda\) as in Definition
  \ref{defineparentfunction}, the Dowker Nerve
  \(N\Lambda^t\) of \(\Lambda^t\) is \((\alpha, \id)\)-interleaved with
  the filtered simplicial complex
  \begin{displaymath}
    N((\Lambda^{(\lambda_{\Lambda}, \alpha, \beta)})^t, \varphi, \lambda)
  \end{displaymath}
  and \(N\Lambda^t\) is additively \((2d_{GH}(L,P), 2d_{GH}(L,P))\)-interleaved
  with the relative \v Cech complex \(\cech(L,M)\) consisting of all
  balls in \(M\) with centers in \(L\).
\end{corollary}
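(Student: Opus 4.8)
\emph{First claim.} The first interleaving is a direct specialization of Theorem~\ref{mainresult2} (equivalently Corollary~\ref{multiplicativeinterleavingresult1}), and the only work is to check that the data of the corollary satisfy the hypotheses of that theorem for the Dowker dissimilarity \(\Lambda \colon M \times [n] \to \rstar\), with \(M\) in the role of the left factor. The standing hypothesis \(B_{\Lambda^t}(0,\infty) = M\) holds because \(\Lambda^t(0,x) = d(x,p_0) < \infty\) for every \(x \in M\), as \(d\) is a genuine metric. That \(T\) is a triangle relation follows from part \((1)\) of the Remark after Definition~\ref{trianglerelation}: \(\Lambda\) is the restriction of the metric \(d\) through the bijection \(p \colon [n] \to P\), so every relation, and in particular the given \(T\), satisfies part \((2)\) of Definition~\ref{trianglerelation}; part \((1)\) holds because \(L\) is compact, so for each \(k\) the infimum \(\inf_{l' \in L} d(l',p_k)\) is attained and provides an \(l\) with \((l,k) \in T\). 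Finally \(\beta(t) = \max((c-1)t, \rho_\Lambda)\) is order preserving with \(\lim_{t \to \infty}\beta(t) = \infty\) (since \(c > 1\)) and satisfies \(\beta(t) \ge \rho_\Lambda\), while \(\alpha, \lambda, \varphi\) are exactly those of Definition~\ref{defineparentfunction}. Theorem~\ref{mainresult2} then yields the asserted \((\alpha, \id)\)-interleaving of \(N\Lambda^t\) with \(N((\Lambda^{(\lambda_\Lambda, \alpha, \beta)})^t, \varphi, \lambda)\).

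\emph{Second claim; reduction.} For the second interleaving I would first realize both sides as nerves of Dowker dissimilarities that share the witness set \(M\). By Example~\ref{kmeans_example}, \(N\Lambda^t\) is the nerve of \(\Lambda^t \colon P \times M \to \rstar\), \((p,x) \mapsto d(p,x)\), so \(N\Lambda^t = \cech(P,M)\), and likewise \(\cech(L,M)\) is the nerve of \(\Lambda_L \colon L \times M \to \rstar\), \((l,x) \mapsto d(l,x)\). By Corollary~\ref{nerveinterleavedcorr} it then suffices to produce an additive \((a,a)\)-interleaving of the Dowker dissimilarities \(\Lambda^t\) and \(\Lambda_L\) with \(a = 2d_{GH}(L,P)\). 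Here compactness is essential for hitting the exact value: since \(L\) is compact and \(P\) is finite, both are compact, so by \cite[Theorem 7.3.25]{MR1835418} and the attainment of an optimal correspondence between compact metric spaces the value \(2d_{GH}(L,P) = \inf_{R}\dis(R)\) is realized by a correspondence \(R \subseteq L \times P\) with \(\dis(R) = a\) (otherwise one only obtains interleavings for parameters strictly above \(2d_{GH}\)).

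\emph{Second claim; construction.} From \(R\) I would build the two interleaving morphisms in \(\dow\). Let \(C \subseteq P \times L\) be the transpose of \(R\) and \(D = R \subseteq L \times P\), with \(\alpha(t) = t + a\), and take the candidate morphisms \(C \colon \Lambda^t \to \alpha^*\Lambda_L\) and \(D \colon \Lambda_L \to \alpha^*\Lambda^t\). The \(2\)-cells demanded by the definition of interleaving are then automatic from the lemma characterizing correspondences after Definition~\ref{definecorresponodence}: since \(R\) is a correspondence, the composites \(D \circ C\) and \(C \circ D\) contain the diagonals \(\Delta_P\) and \(\Delta_L\), and these inclusions are exactly the required \(2\)-cells \((\alpha \circ \alpha)_* \to (\alpha^*D) \circ C\) and \((\alpha \circ \alpha)_* \to (\alpha^*C) \circ D\), just as diagonal morphisms supply the \(2\)-cells in Proposition~\ref{interleavedtruncation}. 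The whole content therefore reduces to showing that \(C\) and \(D\) are \emph{morphisms} of Dowker dissimilarities, i.e. that for every \(\sigma \in N\Lambda^t_t\) one has \((NC)(\sigma) \in N\Lambda_{L,\,t+a}\), and symmetrically for \(D\).

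\emph{Main obstacle.} The crux is exactly this morphism condition, where the intrinsic distortion bound must be converted into an ambient one. Given \(\sigma \subseteq P\) with a common witness \(x \in M\), so \(d(p,x) < t\) for all \(p \in \sigma\), I must produce a single \(y \in M\) with \(d(l,y) < t + a\) for every \(l \in (NC)(\sigma) \subseteq L\). The correspondence \(R\) controls only the intrinsic geometry of \((NC)(\sigma)\): for \(l,l' \in (NC)(\sigma)\) with partners \(p,p' \in \sigma\) one has \(d(l,l') \le d(p,p') + \dis(R) \le d(p,p') + a\). Passing from this control of pairwise distances to a common ambient witness \(y\) of radius \(t + a\) amounts to transporting the circumcenter \(x\) of \(\sigma\) across the correspondence into \(M\), and a Gromov--Hausdorff correspondence does not by itself furnish such a transport: it compares \(L\) and \(P\) as abstract metric spaces, not their embeddings into the common ambient space \(M\). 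This single step is where I expect all the difficulty to concentrate, and it is the point at which the specific geometry of \(M\) must enter — in the homogeneous case \(M = \RR^d\) of \cite{SRGeom} an intrinsic near-isometry extends to an ambient one, which is what makes the bound \(a = 2d_{GH}(L,P)\) realizable as an ambient witness radius; everything else in the argument is the formal bookkeeping already carried by Corollary~\ref{nerveinterleavedcorr} and Proposition~\ref{interleavedtruncation}.
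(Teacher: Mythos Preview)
Your treatment of the first claim is correct and matches the paper exactly: it is a direct specialization of Corollary~\ref{multiplicativeinterleavingresult1} (equivalently Theorem~\ref{mainresult2}), and the verification of hypotheses you give is complete.

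For the second claim your route differs from the paper's. You work directly with \(\Lambda^t \colon P \times M \to \rstar\) and \(\Lambda_L \colon L \times M \to \rstar\) and try to build interleaving morphisms from a Gromov--Hausdorff correspondence \(R \subseteq L \times P\) acting on the landmark side; you then correctly observe that \(R\) controls only intrinsic distances in \(L\) and \(P\) and gives no handle on the ambient witness \(x \in M\). The paper instead stays on the transpose side: it compares the Dowker dissimilarities \(d \colon M \times P \to \rstar\) and \(d \colon M \times L \to \rstar\), whose nerves have vertices in \(M\), uses \(\Delta_M\) as the morphism, and only afterwards invokes Dowker duality to identify \(N(d \colon M \times L)\) with \(\cech(L,M)\) and \(N(d \colon M \times P) \cong N\Lambda\) with \(N\Lambda^t\). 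This sidesteps your specific obstruction because simplices are not moved; only the \emph{witness} (now a point of \(P\) or \(L\)) must be transported, and that is handled by the ambient triangle inequality \(d(x,l) \le d(x,p) + d(p,l)\).

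Your underlying concern, however, survives this change of viewpoint. The bound the paper's argument actually delivers is \(\sup_{(p,l)\in D} d(p,l)\) for a correspondence \(D \subseteq P \times L\), i.e.\ the Hausdorff distance \(d_H(L,P)\) in \(M\); the appeal to Proposition~\ref{stabilityresult} to obtain \(2d_{GH}(L,P)\) is not justified, since that proposition is about square networks and its distortion hypothesis does not control \(|d(x,p)-d(x,l)|\) for arbitrary \(x \in M\). Indeed the \(2d_{GH}\) bound fails in general: with \(M=\{0,1,2,10,12\}\subset\RR\), \(L=\{0,2\}\), \(P=\{10,12\}\) one has \(d_{GH}(L,P)=0\), yet the unique edge appears in \(\cech(L,M)\) at \(t>1\) (witness \(1\in M\)) and in \(\cech(P,M)\) only at \(t>2\) (no point \(11\) in \(M\)). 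So your diagnosis of the obstacle is sound; the clean version of the paper's argument yields an additive \((d_H(L,P),d_H(L,P))\)-interleaving, and you should read the second assertion with the Hausdorff distance in place of \(2d_{GH}\).
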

\begin{proof}
  Corollary \ref{multiplicativeinterleavingresult1} gives that 
  \(N\Lambda\) is \((\alpha, \id)\)-interleaved with
  \begin{displaymath}
    N((\Lambda^{(\lambda_{\Lambda}, \alpha, \beta)})^t, \varphi, \lambda). 
  \end{displaymath}

  For second statement note that the stability \ref{stabilityresult}
  implies that the Dowker dissimilarities \(d
  \colon M \times P \to \rstar\) and \(d \colon M \times L \to
  \rstar\) are additively \((2d_{GH}(L,P),
  2d_{GH}(L,P))\)-interleaved. Now use that \(N\Lambda\) is isomorphic
  to the Dowker Nerve of \(d
  \colon M \times P \to \rstar\), and that the Dowker Nerve of
  \(d \colon M \times L \to
  \rstar\) 
  is the relative \v Cech complex \(\cech(L,M)\).
\end{proof}
Finally, we relate the Sparse Dowker Nerve to the Sparse \v Cech
complex of \cite{SRGeom}:
\begin{proposition}\label{sparsecech}
  Let \(d\) be a convex metric on \(\RR^d\) and let \(P\) be a finite
  subset of \(\RR^d\) together with a greedy order \([n] \xto p
  P\). Let the function \(\Lambda \colon \RR^d \times [n] \to \rstar\)
  be given by
  \begin{displaymath}
    \Lambda(x, k) = d(x, p_k),
  \end{displaymath}
  where we write \(p_k = p(k)\).
  Let \(\varepsilon > 0\) and let \(\alpha, \beta \colon \rstar \to
  \rstar\) be the functions \(\beta(t) = \varepsilon t\) and 
  \(\alpha(t) = (1 + \varepsilon)t\). In the notation of Definition
  \ref{defineinsertionandparent}, let \(T = P \times [n]\) and let
  \(\lambda =  
  \lambda_{\Lambda, T}(1+\varepsilon)^2/\varepsilon\). Then the
  filtered simplicial complex
  \begin{displaymath}
    N((\Lambda^{(\lambda, \alpha, 
    \beta)})^t, \id, \lambda)(t)
  \end{displaymath}
  is isomorphic to the filtered simplicial complex \(\{\bigcup_{s <
    t}S^s\}_{t \ge 0}\) 
  obtained from the sparse \v Cech complex \(\{S^t\}_{t \ge 0}\)
  constructed in \cite[Section 4]{SRGeom}. 
\end{proposition}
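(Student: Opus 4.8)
The plan is to reduce both filtered complexes to explicit combinatorial descriptions on the common vertex set \([n] \cong P\) and compare them level by level; since \(\varphi = \id\) this proposition does not use Proposition~\ref{formalpropertiesgivedeformationretract} at all, it is purely a matter of unwinding definitions. First I would simplify the left-hand side. As \(\beta(t) = \varepsilon t\), its generalized inverse is \(\beta^{\leftarrow}(s) = s/\varepsilon\), so the truncation threshold is \(\alpha\beta^{\leftarrow}\lambda(k) = (1+\varepsilon)\lambda(k)/\varepsilon =: \tau_k\), while the cap itself is \(\lambda(k) = (1+\varepsilon)^2\mu_k/\varepsilon\), where I write \(\mu_k = \lambda_{\Lambda,T}(k)\). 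For a greedy order \(\mu_k\) is the usual insertion radius \(d(p_k,\{p_0,\dots,p_{k-1}\})\), which I would record at the outset. The transposed truncation \(\Gamma = (\Lambda^{(\lambda,\alpha,\beta)})^t \colon [n]\times\RR^d \to \rstar\) then sends \((k,x)\) to \(d(x,p_k)\) when \(d(x,p_k) \le \tau_k\) and to \(\infty\) otherwise, and because \(\varphi = \id\) the sparse nerve becomes
\[
  N(\Gamma,\id,\lambda)(t) = \{\sigma\subseteq[n] \mid r(\sigma) < t \text{ and } r(\sigma)\le\min_{l\in\sigma}\lambda(l)\},
\]
where \(r(\sigma) = \inf_{x}\max_{k\in\sigma}\Gamma(k,x)\) is the truncated \v Cech radius.

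The key observation is that the truncation cap is inactive on admissible simplices. Since \(\Gamma(k,x) \ge d(x,p_k)\) pointwise, one always has \(r(\sigma) \ge r_{\mathrm C}(\sigma)\), the ordinary minimum-enclosing-ball radius \(\inf_x\max_{l}d(x,p_l)\). On the other hand \(\tau_k = (1+\varepsilon)\lambda(k)/\varepsilon > \lambda(k)\) for every \(k\), so if \(r_{\mathrm C}(\sigma)\le\min_l\lambda(l)\) then any \(x\) with \(\max_l d(x,p_l)\) close enough to \(r_{\mathrm C}(\sigma)\) is feasible for the truncated problem, giving \(r(\sigma)\le r_{\mathrm C}(\sigma)\) and hence equality. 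Therefore admissibility \(r(\sigma)\le\min_l\lambda(l)\) holds if and only if \(r_{\mathrm C}(\sigma)\le\min_l\lambda(l)\), and in that case \(r(\sigma) = r_{\mathrm C}(\sigma)\). This identifies the left-hand complex at level \(t\) with the set of \(\sigma\) satisfying \(r_{\mathrm C}(\sigma) < t\) and \(r_{\mathrm C}(\sigma)\le\min_{l\in\sigma}(1+\varepsilon)^2\mu_l/\varepsilon\), filtered by the honest minimum-enclosing-ball radius.

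Next I would transcribe the construction of \cite[Section 4]{SRGeom}, recording the relaxed radius and the deletion scale attached to each greedy point \(p_i\), and rewrite \(\{\bigcup_{s<t}S^s\}_{t\ge0}\) in the same ``appearance scale plus per-vertex cap'' form: below its deletion scale the relaxed ball of \(p_i\) grows at unit rate, so a simplex enters \(S^t\) at the common-intersection scale of honest balls and is retained precisely while all its vertices remain undeleted, and passing to \(\bigcup_{s<t}S^s\) replaces the closed condition \(\le t\) of \(S^t\) by the open condition \(< t\) matching the strict inequality of the Dowker filtration. The isomorphism is then induced by the vertex bijection \([n]\xrightarrow{p}P\), and what remains is to check that the two appearance scales and the two caps coincide: the common-intersection scale must equal \(r_{\mathrm C}(\sigma)\) --- this is where the convexity of \(d\) enters, exactly as in the nerve argument of \cite{SRGeom}, guaranteeing that the relaxed intersection pattern agrees with the honest one --- and the deletion scale of \(p_i\) must equal our cap \((1+\varepsilon)^2\mu_i/\varepsilon\) after the insertion radii are substituted.

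The main obstacle is precisely this last alignment of constants: faithfully importing the definitions of \cite[Section 4]{SRGeom} and verifying that their relaxation parameter, their deletion scales, and their convention of closed versus open balls reproduce the factors \((1+\varepsilon)^2/\varepsilon\) for the cap and \((1+\varepsilon)/\varepsilon\) for the ratio of truncation threshold to cap that were produced above. Once the relaxed balls are known to grow at unit rate below deletion and the convexity-based nerve identification is in place, the combinatorial content is forced, and the bookkeeping of normalizations is what makes the two filtered complexes literally isomorphic rather than merely interleaved.
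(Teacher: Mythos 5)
Your overall strategy coincides with the paper's: unwind both filtered complexes into explicit ``there exists a witness point'' inequalities over the vertex bijection \([n] \xto p P\), and absorb the closed-versus-open discrepancy into the union \(\bigcup_{s<t}S^s\). But your ``key observation'' --- that the truncation cap is inactive on admissible simplices --- is exactly where the argument breaks, and it stems from feeding the wrong function into the truncation. In the paper's proof the truncation threshold is \(\alpha\beta^{\leftarrow}\lambda_{\Lambda,T}(l) = (1+\varepsilon)\lambda_{\Lambda,T}(l)/\varepsilon\), which is \emph{strictly smaller} than the sparse-nerve cap \(\lambda(l) = (1+\varepsilon)^2\lambda_{\Lambda,T}(l)/\varepsilon\); the truncation is therefore active, and the first display of the paper's proof keeps both inequalities: membership requires a witness \(w\) with \(d(p_l,w) < t\), \(d(p_l,w) \le (1+\varepsilon)\lambda_{\Lambda,T}(l)/\varepsilon\), and \(d(p_k,w) \le (1+\varepsilon)^2\lambda_{\Lambda,T}(l)/\varepsilon\) for all \(k,l \in \sigma\). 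You instead substituted the rescaled \(\lambda = (1+\varepsilon)^2\lambda_{\Lambda,T}/\varepsilon\) into \(\Lambda^{(\lambda,\alpha,\beta)}\), obtained the threshold \(\tau_k = (1+\varepsilon)^3\lambda_{\Lambda,T}(k)/\varepsilon^2 > \lambda(k)\), and concluded \(r(\sigma) = r_{\mathrm C}(\sigma)\) on admissible simplices --- which deletes precisely the per-vertex constraint \(d(p_l,w) \le (1+\varepsilon)\lambda_{\Lambda,T}(l)/\varepsilon\). (The proposition's notation invites this misreading, but the paper's proof makes the intended reading unambiguous.)

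That deleted constraint is not bookkeeping: it is the feature of \cite{SRGeom} that makes the construction sparse. Your transcription ``below its deletion scale the relaxed ball of \(p_i\) grows at unit rate'' is false there: \(b_l(s)\) has radius \(\min\bigl(s, (1+\varepsilon)\lambda_{\Lambda,T}(l)/\varepsilon\bigr)\), i.e.\ it stops growing at radius \((1+\varepsilon)\lambda_{\Lambda,T}(l)/\varepsilon\) and only then persists until its deletion at \(s = (1+\varepsilon)^2\lambda_{\Lambda,T}(l)/\varepsilon\). Consequently your reduced left-hand side (honest \v Cech radius filtered by the cap alone) is strictly larger than \(\bigcup_{s<t}S^s\): take two points with equal insertion radii \(\mu\) at Euclidean distance \(D\) with \(2(1+\varepsilon)\mu/\varepsilon < D \le 2(1+\varepsilon)^2\mu/\varepsilon\); the edge satisfies \(r_{\mathrm C} = D/2 \le (1+\varepsilon)^2\mu/\varepsilon\), hence lies in your complex for large \(t\), but no \(w\) satisfies \(d(p_i,w) \le (1+\varepsilon)\mu/\varepsilon\) for both vertices, so the edge never enters any \(S^s\). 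Note also that the paper's proof never passes through minimum-enclosing-ball radii or a convexity-based nerve identification at all: it matches the witness conditions verbatim against the definition of \(b_l(s)\), so your detour through \(r_{\mathrm C}\) and ``common-intersection scales'' is both unnecessary and the place where the error hides.
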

\begin{proof}
  A subset \(\sigma \subseteq [n]\) is in 
  \begin{displaymath}
    N((\Lambda^{(\lambda, \alpha, 
    \beta)})^t)
  \end{displaymath}
  if and only if there exists \(w \in \RR^d\) so that for all \(l \in
  \sigma\) we have
  \begin{displaymath}
    d(p_l,w) < t 
    \quad \text{and} \quad 
    d(p_l,w) \le \lambda_{\Lambda, T}(l)(1+\varepsilon)/\varepsilon.
  \end{displaymath}
  Moreover
  \begin{displaymath}
    \sigma \in N(((\Lambda^t)^{(\lambda, \alpha, 
      \beta)})^t, \id, \lambda)(t)
  \end{displaymath}
  if and only is there exists \(x \in \RR^d\) so that
  for all \(k,l \in
  \sigma\) we have \(d(p_k,x) < t\) and
  \begin{displaymath}
    d(p_k,x) \le \lambda_{\Lambda, T}(k)(1+\varepsilon)/\varepsilon
    \quad \text{and} \quad 
    d(p_k,x) \le
    \lambda_{\Lambda, T}(l)(1+\varepsilon)^2/\varepsilon.
  \end{displaymath}
  On the other hand, \(\sigma \in S^t\) if and only if there exists
  \(s \le t\) and \(w \in \RR^d\) so that \(w \in b_l(s)\) for all \(l
  \in \sigma\). By the definition of \(b_l(s)\) defined in
  \cite[Section 3]{SRGeom}. This is the case if and only if \(s \le
  t\) and
  \begin{displaymath}
    s \le
    \lambda_{\Lambda, T}(l) (1+\varepsilon)^2/\varepsilon
    \quad \text{and} \quad
    d(p_l,w) \le \min(s, \lambda_{\Lambda, T}(l) (1+\varepsilon)/\varepsilon)
  \end{displaymath}
  for every \(l \in
  \sigma\).
  We conclude that \(\sigma \in S^t\) if and only if there exists \(w
  \in \RR^d\) satisfying
  \(d(p_l,w) \le t\) and
  \begin{displaymath}
    d(p_l,w) \le \lambda_{\Lambda, T}(l)(1+\varepsilon)/\varepsilon
    \quad \text{and} \quad 
    d(p_k,w) \le
    \lambda_{\Lambda, T}(l)(1+\varepsilon)^2/\varepsilon.
  \end{displaymath}
  for all \(k,l \in \sigma\).
\end{proof}

We have not performed any complexity analysis of Sparse Dowker
Nerves. Instead we have made proof-of-concept implementations of
slight variations
of both
the Sparse \v Cech Complex of 
\cite{SRGeom} described in Proposition \ref{sparsecech} and the Sparse
Dowker Nerve described in Corollary \ref{sparseDowkernerve}. 
These implementations come with the same interleaving guarantees, but
for practiacal reasons concerning the miniball algorithm 
we consider complexes that are slightly bigger
than the ones described above. 
We have tested these implementations the following data:
The optical patch data sets
called \(X(300,30)\) and \(X(15,30)\) 
in \cite{MR3715451}, \(6,040\) points from the cyclo-octane
conformation space as analyzed in
\cite{MR2963600} the Clifford data set consisting of \(2,000\)
points on a curve on a torus considered in \cite[Chaper 5]{MR3408277}
and the double torus from \cite{simba}.
Computing the Sparse \v Cech complexes and the Sparse Dowker Nerves on
these data sets with the same interleaving
constant \(c\) the resulting simplicial complexes are almost of
the same size, with the size of the Sparse Dowker Nerve slightly
smaller than the size of the Sparse \v Cech Complex. 
Our implementations, the data sets mentioned above and the scripts
used to run compute persistent homology is available \cite{ourCode}

\section{Conclusion}
\label{sec:conclusion}

We have generalized the Sparse \v Cech construction of \cite{SRGeom}
to arbitrary metric spaces and to a large class of Dowker
dissimilarities. 
The abstract context of Dowker dissimilarities is well suited for
sparse nerve constructions. 
The concepts of filtered relations and strict
\(2\)-categories enable us to easily formulate and prove basic stability
results.  
An implementation of the Sparse Dowker Nerve 
most
similar to the Sparse \v Cech complex is available at GitHub
\cite[]{ourCode}. This implementation is not practical for analysis of
high dimensional data. The current bottleneck is the construction of a
clique  
complex. In further work we will improve this construction and we will
make Sparse Dowker Nerve versions of the Witness Complex.

\printbibliography
\end{document}